\newtheorem{theorem}{Theorem}[section]
\newtheorem{proposition}[theorem]{Proposition}
\newtheorem{lemma}[theorem]{Lemma}
\newtheorem{claim}[theorem]{Claim}
\newtheorem{corollary}[theorem]{Corollary}
\theoremstyle{definition}
\newtheorem{problem}[theorem]{Problem}
\numberwithin{equation}{section}
\newcommand{\su}{\subseteq}
\newcommand{\sm}{\setminus}
\newcommand{\FF}{\mathbb{F}}
\newcommand{\RR}{\mathbb{R}}
\newcommand{\pushright}[1]{\ifmeasuring@#1\else\omit\hfill$\displaystyle#1$\fi\ignorespaces}
\begin{document}
\title{Polynomials that vanish to high order on most of the hypercube}
\author{Lisa Sauermann\thanks{Department of Mathematics, Massachusetts Institute of Technology, Cambridge, MA. Email: \url{lsauerma@mit.edu}. This research was supported by the National Science Foundation under Grant CCF-1900460 and under Award DMS-1953772.}\and Yuval Wigderson\thanks{Department of Mathematics, Stanford University, Stanford, CA. Email: \url{yuvalwig@stanford.edu}. Research supported by NSF GRFP Grant DGE-1656518.}}

\maketitle

\begin{abstract}\noindent
Motivated by higher vanishing multiplicity generalizations of Alon's Combinatorial Nullstellensatz and its applications, we study the following problem: for fixed $k\geq 1$ and $n$ large with respect to $k$, what is the minimum possible degree of a polynomial $P\in \RR[x_1,\dots,x_n]$ with $P(0,\dots,0)\neq 0$ such that $P$ has zeroes of multiplicity at least $k$ at all points in $\{0,1\}^n\sm \{(0,\dots,0)\}$? For $k=1$, a classical theorem of Alon and Füredi states that the minimum possible degree of such a polynomial equals $n$. In this paper, we solve the problem for all $k\geq 2$, proving that the answer is $n+2k-3$. As an application, we improve a result of Clifton and Huang on configurations of hyperplanes in $\RR^n$ such that each point in $\{0,1\}^n\sm \{(0,\dots,0)\}$ is covered by at least $k$ hyperplanes, but the point $(0,\dots,0)$ is uncovered. Surprisingly, the proof of our result involves Catalan numbers and arguments from enumerative combinatorics.
\end{abstract}

\section{Introduction}

Alon's Combinatorial Nullstellensatz \cite{alon}, which gives a non-vanishing criterion for a polynomial on some grid of points under certain conditions, has had applications to many problems in combinatorics. The following statement due to Alon and Füredi \cite{alon-furedi} is a now classical example of an application of the Combinatorial Nullstellensatz (even though it historically predated it).

\begin{theorem}[see \cite{alon-furedi}]\label{thm-alon-furedi}
Let $n\geq 1$. Then any polynomial $P\in \RR[x_1,\dots,x_n]$ with $P(0,\dots,0)\neq 0$ and such that $P$ has zeroes at all points in $\{0,1\}^n\sm \{(0,\dots,0)\}$ has degree $\deg P\geq n$.
\end{theorem}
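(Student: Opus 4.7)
The plan is to argue by contradiction: suppose $P\in\RR[x_1,\dots,x_n]$ has degree at most $n-1$, vanishes on $\{0,1\}^n\sm\{(0,\dots,0)\}$, yet has $P(0,\dots,0)\neq 0$. I will derive a contradiction by exhibiting a single linear relation among the values $\{P(\mathbf{1}_S)\}_{S\su[n]}$ on the cube that is forced purely by the degree bound and that is inconsistent with the vanishing hypothesis.

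The relation I will use is the standard $n$-fold finite difference identity
\[
\sum_{S\su[n]}(-1)^{n-|S|}\,P(\mathbf{1}_S) \;=\; 0 \qquad \text{whenever } \deg P < n,
\]
where $\mathbf{1}_S\in\{0,1\}^n$ is the characteristic vector of $S$. By linearity it suffices to check this for a monomial $x^\alpha=x_1^{\alpha_1}\cdots x_n^{\alpha_n}$; since $x^\alpha(\mathbf{1}_S) = [\,\mathrm{supp}(\alpha)\su S\,]$, the sum reduces to $\sum_{S\supseteq\mathrm{supp}(\alpha)}(-1)^{n-|S|}$, which is easily seen to vanish unless $\mathrm{supp}(\alpha)=[n]$. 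But every monomial of $P$ satisfies $|\mathrm{supp}(\alpha)|\le \deg(x^\alpha)\le \deg P<n$, so its support is a proper subset of $[n]$ and the identity follows. (Equivalently, I could reduce $P$ modulo the ideal $(x_i^2-x_i)_{i=1}^n$ to its unique multilinear representative of degree at most $n-1$, and read off the relation from the vanishing of the coefficient of $x_1\cdots x_n$ via M\"obius inversion.)

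Applying the identity to our $P$, every term with $S\neq\emptyset$ is zero by hypothesis, so the whole sum collapses to $(-1)^n P(0,\dots,0)=0$, contradicting $P(0,\dots,0)\neq 0$. Hence no such $P$ of degree less than $n$ can exist, and $\deg P\ge n$ as claimed.

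There is essentially no main obstacle here: the whole proof is a short inclusion--exclusion calculation. The only things that need careful attention are the sign bookkeeping and the elementary but crucial inequality $|\mathrm{supp}(\alpha)|\le \deg(x^\alpha)$, which is the precise mechanism by which a degree bound on $P$ constrains the supports of its monomials enough to activate the telescoping.
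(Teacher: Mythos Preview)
Your proof is correct. The finite-difference identity $\sum_{S\subseteq[n]}(-1)^{n-|S|}P(\mathbf 1_S)=0$ for $\deg P<n$ is verified exactly as you say (the sum $\sum_{S\supseteq T}(-1)^{n-|S|}$ equals $(1-1)^{n-|T|}$, which vanishes for $|T|<n$), and plugging in the vanishing hypothesis immediately gives the contradiction $(-1)^nP(0,\dots,0)=0$.

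Note, however, that the paper does not supply its own proof of this theorem: it is quoted as a classical result of Alon and F\"uredi and described as an application of the Combinatorial Nullstellensatz. Your inclusion--exclusion argument is a well-known alternative route that avoids the Nullstellensatz entirely; it is more elementary (needing only the binomial identity $(1-1)^m=0$ for $m\ge 1$) and works verbatim over any field, whereas the Nullstellensatz proof yields the same conclusion by locating a nonzero coefficient of $x_1\cdots x_n$ in a suitable auxiliary polynomial. Both are standard; yours is arguably the shortest self-contained proof.
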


The example $P=(x_1-1)\dotsm (x_n-1)$ shows that the bound $\deg P\geq n$ is tight. Alon and Füredi \cite{alon-furedi} proved this theorem in order to solve a problem of Komj\'{a}th \cite{komjath} asking about the minimum possible number $m$ such that there are $m$ hyperplanes in $\RR^n$ covering all points in $\{0,1\}^n\sm \{(0,\dots,0)\}$, but not covering $(0,\dots,0)$. By considering the product of the linear polynomials defining these hyperplanes, Theorem \ref{thm-alon-furedi} easily implies that at least $n$ hyperplanes are needed. As $n$ hyperplanes are also sufficient, the answer to Komj\'{a}th's hyperplane problem is $m=n$.

There has been a lot of work on finding generalizations of both the Combinatorial Nullstellensatz \cite{alon} and of Alon and Füredi's result \cite{alon-furedi} on Komj\'{a}th's hyperplane problem to higher vanishing or covering multiplicities \cite{ball-serra, batzaya-bayarmagnnai,clifton-huang,kos-ronyai, kos-meszaros-ronyai}. 
More generally, there is now a rich collection of  higher-multiplicity generalizations of related algebraic results. There are also various combinatorial applications of such higher-multiplicity results, like Stepanov's method (see e.g. \cite{hanson-petridis}) and the multiplicity Schwartz--Zippel lemma (see e.g.\ \cite{bishnoi-etal,bukh-chao,dvir-etal}).
In this spirit, it is natural to also ask for generalizations of Theorem \ref{thm-alon-furedi} to higher vanishing orders for the polynomial $P$. This leads to the following problem.

\begin{problem}\label{problem-main}
Let $k\geq 2$ and let $n$ be sufficiently large with respect to $k$. What is the minimum degree that a polynomial $P\in \RR[x_1,\dots,x_n]$ can have, if $P(0,\dots,0)\neq 0$ and if $P$ has zeroes of multiplicity at least $k$ at all points in $\{0,1\}^n\sm \{(0,\dots,0)\}$?
\end{problem}

As usual, we say that a polynomial $P\in \RR[x_1,\dots,x_n]$ has a zero of multiplicity at least $k$ at a point $a\in \RR^n$ if all derivatives of $P$ up to order $k-1$ vanish at $a$. Note that $P(a)=0$ is equivalent to $P$ having a zero of multiplicity at least $1$ at $a$.

Ball and Serra \cite[Theorem 4.1]{ball-serra} proved a lower bound of $\deg P\geq n+k-1$ for Problem \ref{problem-main} (in fact, they proved a similar bound for more general grids instead of just $\{0,1\}^n\su \RR^n$). This in particular implies that the answer to Problem \ref{problem-main} is $n+1$ if $k=2$. Clifton and Huang \cite{clifton-huang} proved that the answer to Problem \ref{problem-main} is $n+3$ if $k=3$, and they improved the lower bound to $\deg P\geq n+k+1$ for $k\geq 4$. Clifton and Huang were actually studying the generalization of Komj\'{a}th's hyperplane problem \cite{komjath} mentioned above to higher covering multiplicities (where every point in $\{0,1\}^n\sm \{(0,\dots,0)\}$ needs to be covered by at least $k$ hyperplanes, while $(0,\ldots,0)$ must remain uncovered). Their approach was to consider the product of the polynomials for all of the hyperplanes and to prove a lower bound for the degree of this product. This naturally leads to Problem \ref{problem-main}, even though their lower bound results are not explicitly stated in the setting of Problem \ref{problem-main}.

In this paper, we resolve Problem \ref{problem-main}, showing that the answer is $n+2k-3$. This is the content of the following theorem.

\begin{theorem}\label{thm-1}
Let $k\geq 2$ and $n\geq 2k-3$. Then any polynomial $P\in \RR[x_1,\dots,x_n]$ with $P(0,\dots,0)\neq 0$ having zeroes of multiplicity at least $k$ at all points in $\{0,1\}^n\sm \{(0,\dots,0)\}$ has degree $\deg P\geq n+2k-3$. Furthermore, there exists such a polynomial $P$ with degree $\deg P= n+2k-3$.
\end{theorem}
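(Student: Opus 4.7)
The theorem has two matching halves, a lower bound and an explicit construction, which I would attack independently.

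For the upper bound, the plan is to produce a polynomial of the form $P(x) = \prod_{i=1}^n (x_i - 1) \cdot Q(x)$, where $Q \in \RR[x_1,\dots,x_n]$ is of degree $2k-3$, satisfies $Q(0,\dots,0)\neq 0$, and vanishes to order at least $k - t$ at every point of Hamming weight exactly $t$, for $1 \leq t \leq k-1$. Because $\prod_i(x_i-1)$ already vanishes to order equal to the Hamming weight at every point of $\{0,1\}^n$, this is enough to ensure $P$ has a zero of multiplicity at least $k$ at every nonzero point of the cube, while $P(0,\dots,0)\neq 0$. For $k\in\{2,3\}$ the symmetric choice $Q(x)=\prod_{t=1}^{k-1}(x_1+\dots+x_n-t)^{k-t}$ suffices, and indeed has degree exactly $\binom{k}{2}=2k-3$. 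For $k\geq 4$, however, $\binom{k}{2}>2k-3$, so a genuinely non-symmetric construction is needed: the goal is to find one polynomial of degree $2k-3$ whose Taylor expansions at the various Hamming-weight layers each have enough cancellation. This is where I expect the Catalan combinatorics to enter; for instance, one may try to write $Q$ as a weighted sum over lattice-path-like objects of products of ``partial-sum'' linear forms $x_1+\dots+x_j-c$, with coefficients chosen so that the wrong-order Taylor terms cancel at each layer.

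For the lower bound, the plan begins with the classical derivative-of-Alon--Füredi argument of Ball and Serra: if $\partial^\alpha P(0,\dots,0)\neq 0$ for some multi-index $\alpha$ with $|\alpha|=k-1$, then $\partial^\alpha P$ vanishes on $\{0,1\}^n\sm\{(0,\dots,0)\}$ but not at the origin, so Theorem \ref{thm-alon-furedi} gives $\deg(\partial^\alpha P)\geq n$, hence $\deg P\geq n+k-1$. To push this to $n+2k-3$, one must exploit much more of the multiplicity-$k$ hypothesis than a single derivative uses. The strategy is to build, out of the partial derivatives $\partial^\alpha P$ of orders up to $k-1$, a ``higher virtual derivative'' $D$ of order $2k-3$ --- for example, a suitable $\RR$-linear combination of various $\partial^\alpha P$'s, or the image of $P$ under some differential operator --- with the two properties that (i) $D$ still vanishes identically on $\{0,1\}^n\sm\{(0,\dots,0)\}$ and (ii) $D(0,\dots,0)\neq 0$ whenever $P(0,\dots,0)\neq 0$; applying Theorem \ref{thm-alon-furedi} to $D$ then yields $\deg D\geq n$ and hence $\deg P\geq n+2k-3$. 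Equivalently, one can argue by contradiction: assuming $\deg P\leq n+2k-4$, use Alon--Füredi iteratively, moving outward through the weight classes $1,2,\dots,k-1$, to force $\partial^\alpha P(0,\dots,0)=0$ for every multi-index $\alpha$ of size up to $2k-4$, and ultimately for $\alpha=0$, contradicting $P(0,\dots,0)\neq 0$.

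The main obstacle is the case $k\geq 4$, where both halves must go beyond what symmetric reasoning gives: the naive symmetric $Q$ has degree $\binom{k}{2}>2k-3$, and a single-derivative Alon--Füredi argument (Ball--Serra, Clifton--Huang) only reaches $n+k-1$ or $n+k+1$. On the construction side, the challenge is to find one polynomial of degree $2k-3$ that simultaneously handles all the weight classes $1,\dots,k-1$ far more efficiently than a product of powers of $\sum_i x_i-t$ can. On the lower-bound side, the challenge is to aggregate constraints coming from different weight classes and different orders of derivatives into a single rigidity statement. The abstract's pointer to Catalan numbers and enumerative combinatorics strongly suggests that both aggregations are indexed by the same combinatorial family --- most naturally, ballot sequences of length $2k-3$ or equivalent Dyck-path-like objects --- and it is this shared combinatorial bookkeeping that produces the sharp exponent $2k-3$ on both sides.
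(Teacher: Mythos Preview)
Your proposal is a strategic outline rather than a proof, and it diverges substantially from what the paper actually does. Both halves of your plan have genuine gaps, and in each case the paper's route is quite different.

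\textbf{Upper bound.} You propose to find $P=\prod_i(x_i-1)\cdot Q$ with $\deg Q=2k-3$, but you acknowledge you do not have $Q$ for $k\geq 4$ and only speculate that Catalan-indexed lattice paths might help. The paper does not construct $P$ explicitly at all. Instead, it proves a reduction lemma (Claim~2.1): any polynomial can be modified, by subtracting multiples of $x_{i_1}(x_{i_1}-1)\cdots x_{i_k}(x_{i_k}-1)\cdot(\text{monomial})$ and of $x_{i_1}(x_{i_1}-1)\cdots x_{i_{k-1}}(x_{i_{k-1}}-1)\cdot\prod_j(x_j-1)$, into a ``$k$-reduced'' polynomial of degree $\leq n+2k-3$ without changing its order-$(k-1)$ jet at $0$ or its order-$k$ jets elsewhere on the cube. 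Applying this to $Q=x_1^\ell(x_1-1)^k\cdots(x_n-1)^k$ gives existence immediately. So the construction is non-explicit and requires no combinatorics.

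\textbf{Lower bound.} Your ``higher virtual derivative'' $D$ of order $2k-3$ is the main gap. You want $D$ built from $\partial^\alpha P$ with $|\alpha|\leq k-1$ so that $D$ vanishes on $\{0,1\}^n\setminus\{0\}$ yet $D(0)\neq 0$; but any $\RR$-linear combination of such $\partial^\alpha P$ has degree $\geq \deg P-(k-1)$, so Alon--F\"uredi would only give $\deg P\geq n+k-1$, not $n+2k-3$. To gain the extra $k-2$ you would need to differentiate to order $2k-3$, but then vanishing at nonzero cube points is no longer automatic. You give no mechanism for this, and the paper does not proceed this way.

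The paper's lower bound is pure linear algebra. Let $U_k$ be the space of $k$-reduced polynomials (degree $\leq n+2k-3$, no monomial divisible by a product of $k$ squares of variables) and $V_k\subseteq U_k$ the subspace vanishing to order $k$ on $\{0,1\}^n\setminus\{0\}$. By the reduction lemma and a dimension count, $\dim V_k=M_{k-1}(n)$. One then defines $\varphi_k:V_k\to\RR[x_1,\dots,x_n]$ as the ``top homogeneous part of degree $n+2k-3$'' map, and identifies a concrete target space $W_k$ of the same dimension $M_{k-1}(n)$. The entire lower bound reduces to showing $\varphi_k:V_k\to W_k$ is an isomorphism, i.e.\ that no nonzero $P\in V_k$ has degree $<n+2k-3$.

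\textbf{Where Catalan numbers actually enter.} They appear neither in an explicit construction nor in a derivative operator, but in proving $\varphi_k$ is surjective. One takes the specific symmetric polynomial $Q=(-1)^{(k-1)n}\prod_i(x_i-1)^k$, reduces it to $P\in V_k$, and computes $\varphi_k(P)/(x_1\cdots x_n)$ in the power-sum basis. The coefficient of $x_1^{2k-3}+\cdots+x_n^{2k-3}$ turns out to equal $(-1)^{k-1}C_{k-2}$; its nonvanishing is exactly what makes $\varphi_k(V_k)$ strictly larger than the inductively-known subspace $W_k'$, forcing $\varphi_k(V_k)=W_k$. This is a rather different role for the Catalan numbers than the one you anticipate.
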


Instead of demanding $P(0,\dots,0)\neq 0$, one can also ask a more general version of Problem \ref{problem-main} where $P$ is required to have a zero of multiplicity exactly $\ell$ at $(0,\dots,0)$ for some $\ell\in \{0,\dots,k-1\}$ (note that for $\ell\geq k$, we cannot expect any interesting lower bounds on the degree of $P$ for large $n$, as the example $P=x_1^\ell(x_1-1)^k$ with $\deg P=\ell+k$ shows). Also note that the case $\ell=0$ corresponds to Problem \ref{problem-main} above. Our next theorem resolves this more general problem for $0\leq \ell \leq k-2$.

\begin{theorem}\label{thm-2}
Let $k\geq 2$ and $n\geq 2k-3$. Let $P\in \RR[x_1,\dots,x_n]$ be a polynomial having zeroes of multiplicity at least $k$ at all points in $\{0,1\}^n\sm \{(0,\dots,0)\}$, and such that $P$ does not have a zero of multiplicity at least $k-1$ at $(0,\dots,0)$. Then $P$ must have degree $\deg P\geq n+2k-3$. Furthermore, for every $\ell=0,\dots,k-2$, there exists a polynomial $P$ with degree $\deg P= n+2k-3$ having zeroes of multiplicity at least $k$ at all points in $\{0,1\}^n\sm \{(0,\dots,0)\}$, and such that $P$ has a zero of multiplicity exactly $\ell$ at $(0,\dots,0)$.
\end{theorem}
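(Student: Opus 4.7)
Plan:

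For the lower bound, the strategy is to reduce to Theorem~\ref{thm-1}. Since $P$ has multiplicity exactly $\ell \in \{0,\dots,k-2\}$ at the origin, there exists a multi-index $\alpha$ with $|\alpha| = \ell$ such that the partial derivative $Q := \partial^\alpha P$ satisfies $Q(0) \neq 0$. Then $Q$ has multiplicity at least $k-\ell \geq 2$ at every nonzero point of $\{0,1\}^n$, and $\deg Q \leq \deg P - \ell$. Applying Theorem~\ref{thm-1} with parameter $k-\ell$ yields $\deg Q \geq n + 2(k-\ell) - 3$, hence $\deg P \geq n + 2k - \ell - 3$. For $\ell = 0$ this already matches the desired bound, but for $\ell \geq 1$ it falls short of $n + 2k - 3$ by exactly $\ell$.

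Closing this $\ell$-sized gap is the main obstacle. The most promising approach is to revisit the proof of Theorem~\ref{thm-1}---which, per the abstract, uses Catalan numbers and arguments from enumerative combinatorics---and adapt it so that it tracks the vanishing multiplicity at the origin, hopefully going through with only minor changes when the hypothesis $P(0) \neq 0$ is replaced by the strictly weaker hypothesis that $P$ has multiplicity at most $k-2$ at the origin, still yielding the bound $n + 2k - 3$. A cruder alternative is to try to exploit extra algebraic structure possessed by the derivative polynomial $Q = \partial^\alpha P$ beyond the bare hypotheses of Theorem~\ref{thm-1} in order to rule out extremal examples, but this seems harder to carry out cleanly.

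For the upper bound, a polynomial of degree exactly $n+2k-3$ with multiplicity at least $k$ at all nonzero points of $\{0,1\}^n$ and multiplicity exactly $\ell$ at the origin is needed for each $\ell \in \{0,\dots,k-2\}$. The case $\ell = 0$ is immediate from the extremal construction in Theorem~\ref{thm-1}. For $\ell \geq 1$, the plan is to modify that extremal construction by adjusting its low-degree terms. A natural family of candidates is of the form $\prod_{i=1}^n (x_i-1) \cdot R_\ell(x)$, where $R_\ell$ is a polynomial of degree $2k-3$ chosen to vanish to order exactly $\ell$ at the origin while simultaneously producing enough additional vanishing at the points $a \in \{0,1\}^n$ with $1 \leq |a| \leq k-1$ (where the outer product $\prod(x_i-1)$ contributes only $|a| < k$ to the multiplicity). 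The simplest ansatz $R_\ell(x) = f_\ell(x_1+\cdots+x_n)$ already fails by degree counting for $k \geq 3$, so a more elaborate construction---likely involving elementary symmetric polynomials or Catalan-number identities---will be needed. Producing these explicit constructions is the main concrete task in the upper bound, though it should be substantially easier than the lower bound once the extremal construction for Theorem~\ref{thm-1} is understood.
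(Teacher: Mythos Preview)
Your plan has the logical dependency backwards. In the paper, Theorem~\ref{thm-2} is the primary result and Theorem~\ref{thm-1} is derived from it as the special case $\ell=0$; there is no independent proof of Theorem~\ref{thm-1} to reduce to. Your derivative-reduction idea, as you already noticed, loses exactly $\ell$ in the bound and cannot be repaired by appealing to Theorem~\ref{thm-1}. Your fallback---``revisit the proof of Theorem~\ref{thm-1} and adapt it to the weaker hypothesis''---is not an adaptation at all: the paper's argument is written from the start under the hypothesis that $P$ has multiplicity at most $k-2$ at the origin, and proving this is essentially the entire technical content of the paper (the $k$-reduced space $U_k$, the isomorphism $\psi_k$, the map $\varphi_k\colon V_k\to W_k$, and the Catalan-number computation showing $\varphi_k$ is injective). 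So your plan amounts to ``redo the whole paper,'' which is not a reduction.

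For the upper bound, your proposed route of building explicit polynomials $\prod_i(x_i-1)\cdot R_\ell(x)$ is unnecessarily hard. The paper instead argues non-constructively: start with any high-degree polynomial having the required vanishing pattern (e.g.\ $x_1^\ell(x_1-1)^k\cdots(x_n-1)^k$), then apply a reduction lemma (Claim~2.1) that subtracts off multiples of $x_{i_1}(x_{i_1}-1)\cdots x_{i_k}(x_{i_k}-1)$ and of $(x_1-1)\cdots(x_n-1)\cdot x_{i_1}(x_{i_1}-1)\cdots x_{i_{k-1}}(x_{i_{k-1}}-1)$ to land in the space $U_k$ of degree $\le n+2k-3$ without disturbing the multiplicities at nonzero cube points or the multiplicity-$\le k-2$ behavior at the origin. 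The already-proved lower bound then pins the degree at exactly $n+2k-3$. This avoids any explicit formula for $R_\ell$.
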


Note that Theorem \ref{thm-2} implies Theorem \ref{thm-1}. Indeed, it is clear that the first part of Theorem \ref{thm-2} (giving the degree bound $\deg P\geq n+2k-3$) implies the first part of Theorem \ref{thm-1}. Furthermore, the second part of Theorem \ref{thm-1} is equivalent to the second part of Theorem \ref{thm-2} for $\ell=0$.

Theorem \ref{thm-2} does not address the case where $P$ has a zero of multiplicity exactly $k-1$ at $(0,\dots,0)$. This case turns out to have a slightly different answer, as shown in the following theorem.

\begin{theorem}\label{thm-3}
Let $k\geq 2$ and $n\geq 1$. Let $P\in \RR[x_1,\dots,x_n]$ be a polynomial having zeroes of multiplicity at least $k$ at all points in $\{0,1\}^n\sm \{(0,\dots,0)\}$, and a zero of multiplicity exactly $k-1$ at $(0,\dots,0)$. Then $P$ must have degree $\deg P\geq n+2k-2$. Furthermore, there exists such a polynomial $P$ with degree $\deg P= n+2k-2$.
\end{theorem}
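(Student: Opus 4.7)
The plan is to prove both parts by reductions to Theorem~\ref{thm-2}. For the upper bound construction, I would take $Q$ to be the extremal polynomial supplied by the second part of Theorem~\ref{thm-2} for $\ell = k - 2$, so $Q$ has degree $n + 2k - 3$, vanishes with multiplicity at least $k$ at every point of $\{0,1\}^n \sm \{0\}$, and vanishes with multiplicity exactly $k - 2$ at the origin. Setting $P := x_1 \cdot Q$ gives a polynomial of degree $n + 2k - 2$. The three required properties are easy to verify: at $a \in \{0,1\}^n \sm \{0\}$ with $a_1 = 0$, the factor $x_1$ supplies an extra unit of multiplicity; with $a_1 = 1$, the factor $x_1$ is non-vanishing so $P$ inherits $Q$'s multiplicity; and at the origin, the lowest-degree homogeneous component of $P$ is $x_1 \cdot Q_{k-2}$, which is nonzero since $Q_{k-2} \neq 0$, giving multiplicity exactly $k - 1$.

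For the lower bound, suppose $P$ satisfies the hypotheses. The natural strategy is to lift $P$ to a polynomial $\widetilde P \in \RR[x_1, \ldots, x_n, y]$ in $n + 1$ variables so that Theorem~\ref{thm-2} applies to $\widetilde P$ with $\ell = k - 2$, yielding $\deg \widetilde P \geq (n+1) + 2k - 3 = n + 2k - 2$, and then to compare $\deg \widetilde P$ with $\deg P$. The simplest candidate lift, $\widetilde P(x, y) := (1 - y) P(x)$, has multiplicity at least $k$ at every point of $\{0,1\}^{n+1} \sm \{0\}$ (as one checks by expanding derivatives in $y$ and appealing to $P$'s multiplicities) and has degree $\deg P + 1$; however, its multiplicity at the origin of $\RR^{n+1}$ is still exactly $k - 1$, so Theorem~\ref{thm-2} does not apply. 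Only Theorem~\ref{thm-3} itself (in $n+1$ variables) applies to this $\widetilde P$, which would yield a circular bound.

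The key technical step I expect to be the main obstacle is therefore to refine the lift so that its multiplicity at the origin strictly drops below $k - 1$, while preserving multiplicity at least $k$ at every other vertex of $\{0,1\}^{n+1}$ and keeping $\deg \widetilde P$ suitably small. A natural candidate is $\widetilde P(x, y) := (1 - y)[P(x) - c \cdot y^{k-2} (1-y)^{k-1} D(x)]$, where $c \neq 0$ is a constant and $D$ is a polynomial with $D(0) \neq 0$ whose multiplicity at every point of $\{0,1\}^n \sm \{0\}$ is at least $2$ (itself guaranteed by Theorem~\ref{thm-2} applied with $k = 2$, $\ell = 0$): the factor $y^{k-2}$ injects a degree-$(k-2)$ contribution $-c\,D(0)\,y^{k-2}$ at the origin to drop its multiplicity to exactly $k - 2$, the factor $(1-y)^{k-1}$ preserves multiplicity at points $(a, 1)$, and the vanishing of $D$ preserves multiplicity at $(a, 0)$ for $a \neq 0$. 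The delicate issue is degree bookkeeping: since Theorem~\ref{thm-2} forces $\deg D \geq n + 1$, the correction term has degree at least $n + 2k - 2$, and closing this gap between $\deg \widetilde P$ and $\deg P$ cleanly is the hard part. Resolving it will likely require either an alternative inductive argument (e.g.\ induction on $n$ with the straightforward base case $n = 1$, where $P = x_1^{k-1}(x_1 - 1)^k q(x_1)$ forces $\deg P \geq 2k - 1$) or a more sophisticated lift that exploits specific structural information about the equality cases in Theorem~\ref{thm-2}, rather than merely its statement.
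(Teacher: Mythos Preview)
Your proposal has genuine gaps in both parts, and more fundamentally it inverts the logical structure of the paper: Theorem~\ref{thm-3} is the \emph{easy} result, proved with elementary dimension-counting, while Theorem~\ref{thm-2} is the hard one. Using Theorem~\ref{thm-2} as a black box to prove Theorem~\ref{thm-3} is therefore backwards, and it also creates a range problem: Theorem~\ref{thm-2} requires $n\geq 2k-3$, whereas Theorem~\ref{thm-3} is stated for all $n\geq 1$. Your upper-bound construction $P=x_1\cdot Q$ only applies when $n\geq 2k-3$, and your proposed lower-bound lift to $n+1$ variables would likewise need $n+1\geq 2k-3$. More seriously, you yourself concede that the lower-bound lift does not close: the correction term you add to drop the multiplicity at the origin has degree at least $n+2k-2$, so the inequality you extract from Theorem~\ref{thm-2} gives no information about $\deg P$.

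The paper's argument avoids all of this. For the construction it simply takes
\[
P(x_1,\dots,x_n)=x_1^{k-1}(x_1-1)^{k-1}\cdot(x_1-1)\dotsm(x_n-1),
\]
which visibly has degree $n+2k-2$ and the required multiplicities, for every $n\geq 1$. For the lower bound it uses only the preliminary Claims~\ref{claim-reduced-exists}--\ref{claim-isomorphism-evaluations} (which hold for all $n\geq 1$ and do not depend on Theorem~\ref{thm-2}). The point is that the space $U_k$ of $k$-reduced polynomials has dimension $N=(2^n-1)M_k(n)+M_{k-1}(n)$, and the evaluation map $\psi_k$ recording all derivatives of order $<k$ at the nonzero vertices and of order $<k-1$ at the origin is an isomorphism $U_k\to\RR^N$. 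If some $Q$ of degree $\leq n+2k-3$ satisfied the hypotheses, the second part of Claim~\ref{claim-reduced-exists} produces a nonzero $P\in U_k$ with the same vanishing data; but then $\psi_k(P)=0$ (the order-$(k-1)$ derivatives at the origin are \emph{not} recorded by $\psi_k$), contradicting injectivity. This is a clean dimension argument, and it is what you should aim for instead of trying to bootstrap from the much harder Theorem~\ref{thm-2}.
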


Note that Theorem \ref{thm-3} is also true for $k=1$, and is identical to Alon and F\"uredi's result in Theorem \ref{thm-alon-furedi}. Theorem \ref{thm-3} is actually significantly easier to prove than Theorems \ref{thm-1} and \ref{thm-2}.

As an application of Theorem \ref{thm-1}, we can improve a result of Clifton and Huang \cite{clifton-huang} concerning collections of hyperplanes in $\RR^n$ such that every point in $\{0,1\}^n\sm \{(0,\dots,0)\}$ is covered by at least $k$ of these hyperplanes, but no hyperplane contains $(0,\dots,0)$. Generalizing Komj\'ath's hyperplane problem \cite{komjath}, Clifton and Huang studied the minimum possible size of such a collection of hyperplanes, where $k$ is fixed and $n$ is assumed to be sufficiently large with respect to $k$. They proved that the minimum size of such a collection of hyperplanes is $n+1$ if $k=2$ and $n+3$ if $k=3$. Furthermore, for $k\geq 4$ they proved a lower bound of $n+k+1$ for the size of any such collection of hyperplanes. As an immediate corollary of Theorem \ref{thm-1}, we can recover their lower bound for $k\in \{2,3,4\}$ and improve it for $k\geq 5$.

\begin{corollary}\label{coro-hyperplanes}
Fix $k\geq 2$ and let $n\geq 2k-3$. Consider a collection of hyperplanes in $\RR^n$ such that every point in $\{0,1\}^n\sm \{(0,\dots,0)\}$ is covered by at least $k$ of these hyperplanes, but no hyperplane contains $(0,\dots,0)$. Then this collection must consist of at least $n+2k-3$ hyperplanes.
\end{corollary}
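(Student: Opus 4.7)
The plan is to reduce Corollary \ref{coro-hyperplanes} to Theorem \ref{thm-1} by taking the product of the linear forms defining the hyperplanes, which is the standard polynomial-method translation that was already used by Alon--Füredi for the $k=1$ case and by Clifton--Huang in their own multiplicity work.

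Concretely, suppose we have a collection of $m$ hyperplanes $H_1,\dots,H_m\su \RR^n$ satisfying the hypotheses, and for each $i$ let $\ell_i\in \RR[x_1,\dots,x_n]$ be an affine-linear polynomial whose zero set is $H_i$. I would define
\[
P(x_1,\dots,x_n) \;=\; \prod_{i=1}^m \ell_i(x_1,\dots,x_n),
\]
so that $P$ has degree exactly $m$. Since no hyperplane contains $(0,\dots,0)$, each $\ell_i$ is nonzero at the origin, hence $P(0,\dots,0)\neq 0$.

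The key step is to verify that $P$ has a zero of multiplicity at least $k$ at every point $a\in \{0,1\}^n\sm\{(0,\dots,0)\}$. By hypothesis, at least $k$ of the factors $\ell_i$ vanish at $a$, so after reindexing we may write $P = \ell_{i_1}\cdots \ell_{i_k}\cdot Q$ with each $\ell_{i_j}(a)=0$. Any partial derivative of order at most $k-1$ applied to this product yields, by the Leibniz rule, a sum of terms in each of which at least one of the $k$ factors $\ell_{i_1},\dots,\ell_{i_k}$ appears undifferentiated (since we differentiate strictly fewer than $k$ times); that undifferentiated factor vanishes at $a$, so the whole derivative vanishes there. Thus $P$ satisfies the hypotheses of Theorem \ref{thm-1}, which gives $m = \deg P \geq n+2k-3$, as desired.

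There is essentially no obstacle here once Theorem \ref{thm-1} is in hand; the only thing to be careful about is the multiplicity bookkeeping in the Leibniz argument above, which is the reason products of linear forms through a point automatically give rise to high-multiplicity zeroes. The whole deduction is a couple of lines modulo Theorem \ref{thm-1}, which is why the authors call it an immediate corollary.
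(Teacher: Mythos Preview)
Your proof is correct and follows exactly the approach the paper indicates: form the product of the linear polynomials defining the hyperplanes and observe that it satisfies the hypotheses of Theorem \ref{thm-1}. The paper does not spell out the Leibniz-rule multiplicity check, but your argument for it is standard and fine.
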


Indeed, Corollary \ref{coro-hyperplanes} follows from Theorem \ref{thm-1} by taking the linear polynomials corresponding to all of the hyperplanes, and observing that their product $P$ satisfies the assumptions in Theorem \ref{thm-1}. This was also the approach taken by Clifton and Huang \cite{clifton-huang} to prove their lower bounds for the size of any such collection of hyperplanes. Clifton and Huang obtained lower bounds on the degree of the resulting product polynomial $P$ by using a punctured higher-multiplicity version of the Combinatorial Nullstellensatz due to Ball and Serra \cite{ball-serra} and a relatively involved analysis of the conditions on the coefficients of the polynomial $P$ imposed by its vanishing properties. As mentioned above, their arguments also apply in the setting of Problem \ref{problem-main}, but their bounds on the degree of $P$ are weaker than the lower bound we obtain in Theorem \ref{thm-1}.

It is still an interesting open problem to determine the minimum possible size of a collection of hyperplanes as in Corollary \ref{coro-hyperplanes} for $k\geq 4$ (and $n$ sufficiently large with respect to $k$). While Corollary \ref{coro-hyperplanes} gives the best currently known lower bound, the best known upper bound is $n+\binom{k}{2}$, obtained by a construction due to Clifton and Huang \cite{clifton-huang}. They conjectured that this upper bound is tight if $n$ is sufficiently large with respect to $k$; see also the discussion in Subsection \ref{subsec:clifton-huang} in the concluding remarks.

While Theorem \ref{thm-3} is relatively easy to prove, the proof of Theorem \ref{thm-2} takes up most of this paper. Indeed, the proof of Theorem \ref{thm-3} uses only fairly standard techniques related to the Combinatorial Nullstellensatz and its generalizations (like iteratively subtracting appropriate monomials in order to put a given polynomial in some ``canonical form''). Proving Theorem \ref{thm-2} is, however, significantly harder. With various linear algebra arguments, one can reduce the desired statement to showing that a certain linear map is an isomorphism (see Corollary \ref{coro-phi-injective} below). In order to show surjectivity of this linear map, we analyze the representations of certain symmetric polynomials in terms of power sum symmetric polynomials. We then need to show that a certain coefficient of such a representation is non-zero. Surprisingly, it turns out that this coefficient is actually equal (up to sign) to a Catalan number. The fact that enumerative combinatorics arguments appear in our proof is maybe somewhat unexpected, given that Problem \ref{problem-main} is a problem in extremal combinatorics. 

\textit{Notation.} Throughout this paper, we work with the usual convention that the binomial coefficients $\binom{n}{m}$ are defined for all integers $m$ and $n\geq 0$, but we have $\binom{n}{m}=0$ unless $0\leq m\leq n$. The variables $i,j,k,\ell,m,n,r,s,t$ always refer to integers.

\section{Proof of Theorems \ref{thm-2} and \ref{thm-3}}
\label{sect-proof}

In this section we prove Theorem \ref{thm-3}, and we also prove Theorem \ref{thm-2} apart from the proof of Proposition \ref{propo-key} below, which we postpone to Section \ref{sect-proof-proposition}. Recall that Theorem \ref{thm-2} implies Theorem \ref{thm-1}.

For $k\geq 2$, let us say that a polynomial $P\in \RR[x_1,\dots,x_n]$ is \emph{$k$-reduced}, if $\deg P\leq n+2k-3$ and if $P$ does not contain any monomials divisible by $x_{i_1}^2\dotsm x_{i_k}^2$ for some (not necessarily distinct) indices $i_1,\dots,i_k\in \{1,\dots,n\}$ (in other words, no monomial of $P$ is divisible by a product of $k$ squares of variables). Let $U_k\su \RR[x_1,\dots,x_n]$ be the vector space of all $k$-reduced polynomials.

Before starting the proofs of Theorems \ref{thm-2} and \ref{thm-3}, we will show a sequence of claims. The first claim shows that in order to prove the first part of Theorems \ref{thm-2} and \ref{thm-3}, we can restrict ourselves to considering $k$-reduced polynomials $P$. Furthermore, it will be easy to derive the second part of Theorem \ref{thm-2} from this claim.

\begin{claim}\label{claim-reduced-exists}
Let $k\geq 2$ and $n\geq 1$. For every polynomial $Q\in \RR[x_1,\dots,x_n]$, we can find a polynomial $P\in U_k$ with $\deg P\leq \deg Q$ such that the difference $Q-P$ has zeroes of multiplicity at least $k$ at all points in $\{0,1\}^n\sm \{(0,\dots,0)\}$ and a zero of multiplicity at least $k-1$ at $(0,\dots,0)$. Furthermore, if $\deg Q\leq n+2k-3$, then we can even choose $P\in U_k$ with $\deg P\leq \deg Q$ such that the difference $Q-P$ has zeroes of multiplicity at least $k$ at all points in $\{0,1\}^n$.
\end{claim}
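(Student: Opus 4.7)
The plan is to prove the claim by an iterative monomial reduction procedure using the graded lexicographic (grlex) order on monomials. Given $Q$, I will repeatedly eliminate its largest non-$k$-reduced monomial (in grlex) by subtracting a suitable polynomial from the appropriate vanishing ideal. Since grlex is a well-ordering and each step strictly decreases the largest non-$k$-reduced monomial, the procedure terminates at some $P\in U_k$ with $Q-P$ in the required ideal.

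First I handle monomials divisible by $k$ squares. If a monomial $m=\prod_i x_i^{a_i}$ of $Q$ satisfies $\sum_i \lfloor a_i/2 \rfloor \geq k$, I pick a multiset $\{i_1,\dots,i_k\}$ of indices with $x_{i_1}^2\dotsm x_{i_k}^2 \mid m$ and write $m=x_{i_1}^2\dotsm x_{i_k}^2\cdot m'$. The polynomial $h_{i_1,\dots,i_k}:=\prod_{j=1}^k x_{i_j}(x_{i_j}-1)=\prod_j(x_{i_j}^2-x_{i_j})$ has grlex leading monomial $\prod_j x_{i_j}^2$, with all other terms of strictly smaller degree. Moreover, each factor $x_{i_j}(x_{i_j}-1)$ vanishes to order $1$ at every point of $\{0,1\}^n$, so $h_{i_1,\dots,i_k}$ vanishes to multiplicity at least $k$ on all of $\{0,1\}^n$. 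Thus $m'\cdot h_{i_1,\dots,i_k}$ has grlex leading term $m$ and degree $\deg m$, and subtracting the appropriate scalar multiple of it from $Q$ cancels the $m$-term and introduces only monomials of strictly smaller degree, while changing $Q$ only by an element of the stronger ideal of polynomials vanishing to multiplicity $\geq k$ on all of $\{0,1\}^n$.

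For the second type of reduction, I handle monomials that are not divisible by $k$ squares but have degree $>n+2k-3$. A short calculation (writing $a_i=2b_i+\epsilon_i$ with $\epsilon_i\in\{0,1\}$ and combining $\sum b_i\leq k-1$ with $\sum a_i>n+2k-3$ and $\sum \epsilon_i\leq n$) forces each $a_i$ to be odd, $\sum b_i=k-1$, and $\sum a_i=n+2k-2$. So every such monomial is of the form $m_\lambda:=\prod_i x_i^{1+2\lambda_i}$ for some composition $\lambda=(\lambda_1,\dots,\lambda_n)$ of $k-1$ into $n$ nonnegative parts. For each $\lambda$, define
\[
R_\lambda\;:=\;\prod_{i=1}^n x_i^{\lambda_i}(x_i-1)^{1+\lambda_i}.
\]
Then $R_\lambda$ has grlex leading monomial $m_\lambda$ and degree $n+2k-2$. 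At any point $a\in\{0,1\}^n$ with ones at positions $T\su\{1,\dots,n\}$, the $i$-th factor vanishes to order $\lambda_i$ if $i\notin T$ and to order $1+\lambda_i$ if $i\in T$, summing to total multiplicity $\sum_i \lambda_i+|T|=(k-1)+|T|$. This is exactly $k-1$ at the origin (where $T=\emptyset$) and at least $k$ at every other point of $\{0,1\}^n$, so $R_\lambda$ vanishes to multiplicity at least $k$ on $\{0,1\}^n\sm\{(0,\dots,0)\}$ and at least $k-1$ at $(0,\dots,0)$. Subtracting a scalar multiple of $R_\lambda$ from $Q$ cancels the $m_\lambda$-term while introducing only monomials of strictly smaller degree.

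Iterating these two reductions terminates, yielding $P\in U_k$ with $\deg P\leq \deg Q$ (the degree never increases) and $Q-P$ equal to a sum of the subtracted polynomials. Each subtracted polynomial vanishes to multiplicity at least $k$ on $\{0,1\}^n\sm\{(0,\dots,0)\}$ and at least $k-1$ at $(0,\dots,0)$, giving the main conclusion. For the furthermore part, if $\deg Q\leq n+2k-3$ then no monomials of degree $n+2k-2$ ever arise during the reduction (the only newly introduced monomials have strictly smaller degree than those being replaced), so the second type of reduction is never invoked, and each subtracted polynomial vanishes to multiplicity $\geq k$ on all of $\{0,1\}^n$. The main potential obstacle is discovering the explicit formula for $R_\lambda$; once guessed, verifying that it lies in the required ideal reduces to the one-line multiplicity computation above.
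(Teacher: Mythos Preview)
Your proof is correct and follows essentially the same approach as the paper. Your two reduction types match the paper's steps (a) and (b) exactly: your $m'\cdot h_{i_1,\dots,i_k}$ is the paper's $x_{i_1}(x_{i_1}-1)\dotsm x_{i_k}(x_{i_k}-1)\cdot x_1^{m_1}\dotsm x_n^{m_n}$, and your $R_\lambda=\prod_i x_i^{\lambda_i}(x_i-1)^{1+\lambda_i}$ is precisely the paper's $x_{i_1}(x_{i_1}-1)\dotsm x_{i_{k-1}}(x_{i_{k-1}}-1)\cdot(x_1-1)\dotsm(x_n-1)$ rewritten with $\lambda_i$ counting the multiplicity of $i$ among $i_1,\dots,i_{k-1}$. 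The only cosmetic differences are that you invoke the grlex well-ordering for termination (the paper instead argues via ``maximum degree of a bad monomial, then number of bad monomials at that degree''), and your product form for $R_\lambda$ makes the pointwise multiplicity computation slightly more transparent.
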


\begin{proof}
Fix $k\geq 2$ and $n\geq 1$. Note that for any polynomial $Q\in \RR[x_1,\dots,x_n]$, when defining a polynomial $Q^*\in \RR[x_1,\dots,x_n]$ by either
\begin{itemize}
    \item[(a)] $Q^*=Q-a\cdot x_{i_1}(x_{i_1}-1)\dotsm x_{i_k}(x_{i_k}-1)\cdot x_1^{m_1}\dotsm x_n^{m_n}$ with $a\in \RR$, with non-negative integers $m_1,\dots,m_n$ and with (not necessarily distinct) indices $i_1,\dots,i_k\in \{1,\dots,n\}$, or
    \item[(b)] $Q^*=Q-a\cdot x_{i_1}(x_{i_1}-1)\dotsm x_{i_{k-1}}(x_{i_{k-1}}-1)\cdot (x_1-1)\dotsm (x_n-1)$ with $a\in \RR$ and with (not necessarily distinct) indices $i_1,\dots,i_{k-1}\in \{1,\dots,n\}$,
\end{itemize}
the difference $Q-Q^*$ has zeroes of multiplicity at least $k$ at all points in $\{0,1\}^n\sm \{(0,\dots,0)\}$ and a zero of multiplicity at least $k-1$ at $(0,\dots,0)$. Furthermore, if we define $Q^*$ as in (a), then $Q-Q^*$ even has zeroes of multiplicity at least $k$ at all points in $\{0,1\}^n$.

Let us say that a monomial in $\RR[x_1,\dots,x_n]$ is \emph{bad} if it is of the form $x_{i_1}^2\dotsm x_{i_k}^2\cdot x_1^{m_1}\dotsm x_n^{m_n}$ with non-negative integers $m_1,\dots,m_n$ and with (not necessarily distinct) indices $i_1,\dots,i_k\in \{1,\dots,n\}$, or of the form $x_{i_1}^2\dotsm x_{i_{k-1}}^2\cdot x_1\dotsm x_n$  with (not necessarily distinct) indices $i_1,\dots,i_{k-1}\in \{1,\dots,n\}$.

Whenever $Q\in \RR[x_1,\dots,x_n]$ contains a bad monomial, we can apply one of the steps (a) or (b) above, replacing the bad monomial with monomials of lower degree. We can repeatedly perform these steps, always replacing a bad monomial of maximum degree by lower-degree monomials, until we arrive at a polynomial $P\in \RR[x_1,\dots,x_n]$ which does not contain any bad monomials. Indeed, note that when repeatedly applying these steps, the process must terminate at some point (since at every step either the maximum degree of the occurring bad monomials decreases or the maximum degree of the occurring bad monomials remains unchanged but the number of different bad monomials of this maximum degree decreases).

By construction of $P$, we have $\deg P\leq \deg Q$ and the difference $Q-P$ has zeroes of multiplicity at least $k$ at all points in $\{0,1\}^n\sm \{(0,\dots,0)\}$ and a zero of multiplicity at least $k-1$ at $(0,\dots,0)$. Let us now check that $P\in U_k$, i.e.\ that $P$ is $k$-reduced and has degree $\deg P\leq n+2k-3$.

Recall that $P$ does not contain any bad monomials. This means in particular that $P$ does not have any monomials which are divisible by $x_{i_1}^2\dotsm x_{i_k}^2$ for some (not necessarily distinct) indices $i_1,\dots,i_k\in \{1,\dots,n\}$. Note that this already implies that $\deg P\leq n+2k-2$. Furthermore, if $P$ had a monomial of degree $n+2k-2$, then this monomial would need to be of the form $x_{i_1}^2\dotsm x_{i_{k-1}}^2\cdot x_1\dotsm x_n$ with (not necessarily distinct) indices $i_1,\dots,i_{k-1}\in \{1,\dots,n\}$. But such a monomial is also bad, and therefore we must have $\deg P\leq n+2k-3$. This proves that $P\in U_k$.

Finally, for the second part of the claim, note that if we have $\deg Q\leq n+2k-3$, then throughout the process of obtaining $P$ from $Q$ all polynomials have degree at most $n+2k-3$ and we therefore never apply step (b). This means that we will only perform step (a), and hence $Q-P$ has zeroes of multiplicity at least $k$ at all points in $\{0,1\}^n$.
\end{proof}

For $n\geq 1$ and $t\geq 1$, let us define $M_t(n)$ to be the number of $n$-tuples $(m_1,\dots,m_n)$ of non-negative integers with $m_1+\dots+m_n<t$. We remark that $M_t(n)= \binom{n+t-1}n$, but we will not use this formula. Note that for a polynomial $P\in \RR[x_1,\dots,x_n]$ there are precisely $M_t(n)$ different ways to form a derivative of $P$ of order less than $t$.

We will be using various dimension-counting arguments. 
The following claim expresses the dimension of the vector space $U_k$ in terms of the numbers $M_t(n)$ we just defined.

\begin{claim}\label{claim-dim-Uk}
Let $k\geq 2$ and $n\geq 1$. Then the vector space $U_k\su \RR[x_1,\dots,x_n]$ has dimension $\dim U_k=(2^n-1)\cdot M_k(n)+M_{k-1}(n)$.
\end{claim}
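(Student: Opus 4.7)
The plan is to observe that both defining conditions of $U_k$—the degree bound and the absence of monomials divisible by some $x_{i_1}^2 \dotsm x_{i_k}^2$—are conditions that separate on individual monomials. Thus the monomials satisfying both conditions form a basis of $U_k$, and $\dim U_k$ reduces to counting these ``good'' monomials.

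First I would rephrase the divisibility condition. A monomial $x_1^{a_1}\dotsm x_n^{a_n}$ is divisible by some $x_{i_1}^2\dotsm x_{i_k}^2$ precisely when one can choose nonnegative integers $b_1,\dots,b_n$ with $b_1+\dots+b_n=k$ and $2b_i\leq a_i$ for each $i$; this is equivalent to $\lfloor a_1/2\rfloor+\dots+\lfloor a_n/2\rfloor\geq k$. Hence a good monomial is one satisfying
\[
a_1+\dots+a_n\leq n+2k-3 \quad\text{and}\quad \lfloor a_1/2\rfloor+\dots+\lfloor a_n/2\rfloor\leq k-1.
\]

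Next, I would parametrize each exponent by writing $a_i=2b_i+r_i$ with $b_i\geq 0$ and $r_i\in\{0,1\}$, and let $S=\{i:r_i=1\}\subseteq\{1,\dots,n\}$. The two conditions become $|S|+2(b_1+\dots+b_n)\leq n+2k-3$ and $b_1+\dots+b_n\leq k-1$. I would then split on whether $S=\{1,\dots,n\}$ or not. If $S=\{1,\dots,n\}$, the degree bound tightens the second condition to $b_1+\dots+b_n\leq (2k-3)/2$, i.e.\ $b_1+\dots+b_n<k-1$, contributing $M_{k-1}(n)$ tuples. If $|S|<n$, then $(n+2k-3-|S|)/2\geq k-1$, so the degree bound is implied by $b_1+\dots+b_n\leq k-1$, i.e.\ $b_1+\dots+b_n<k$; this contributes $M_k(n)$ tuples for each of the $2^n-1$ proper subsets $S\subsetneq\{1,\dots,n\}$. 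Summing gives exactly $(2^n-1)\cdot M_k(n)+M_{k-1}(n)$, as required.

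This argument is almost entirely bookkeeping; there is no real obstacle. The only place where one must be a little careful is the boundary case $|S|=n$, where the degree constraint $2(b_1+\dots+b_n)\leq 2k-3$ together with integrality forces the strictly smaller bound $b_1+\dots+b_n\leq k-2$—this is precisely the mechanism that produces the $M_{k-1}(n)$ summand rather than another copy of $M_k(n)$.
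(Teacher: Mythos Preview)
Your proof is correct and follows essentially the same approach as the paper: both parametrize each exponent as $2m_i+r_i$ with $r_i\in\{0,1\}$, split on whether all $r_i$ equal $1$, and observe that the degree bound is automatic when $r_1+\dots+r_n\leq n-1$ but forces $m_1+\dots+m_n<k-1$ when $r_1=\dots=r_n=1$. Your explicit reformulation of the divisibility condition via $\sum_i\lfloor a_i/2\rfloor\geq k$ is a nice clarification, but otherwise the arguments are the same.
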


\begin{proof}
The dimension of the vector space $U_k$ equals the number of monomials in $\RR[x_1,\dots,x_n]$ of degree at most $n+2k-3$ which are not divisible by $x_{i_1}^2\dotsm x_{i_k}^2$ for any (not necessarily distinct) indices $i_1,\dots,i_k\in \{1,\dots,n\}$. In other words, $\dim U_k$ is the number of monomials of the form $x_1^{2m_1+r_1}\dotsm x_n^{2m_n+r_n}$ with non-negative integers  $(m_1,\dots,m_n)$ with $m_1+\dots+m_n<k$ and $r_1,\dots,r_n\in \{0,1\}$ such that $(2m_1+r_1)+\dots +(2m_n+r_n)\leq n+2k-3$. If $r_1+\dots+r_n\leq n-1$, then there are precisely $M_k(n)$ choices for $(m_1,\dots,m_n)$. But if $r_1=\dots=r_n=1$, then we must have $2m_1+\dots +2m_n\leq 2k-3$, which (for non-negative integers $m_1,\dots,m_n$) is equivalent to $m_1+\dots+m_n<k-1$. Hence, in this case, there are only $M_{k-1}(n)$ choices for $(m_1,\dots,m_n)$. All in all we obtain $\dim U_k=(2^n-1)\cdot M_k(n)+M_{k-1}(n)$, as desired.
\end{proof}

\begin{claim}\label{claim-isomorphism-evaluations}
Let $k\geq 2$ and $n\geq 1$, and let $N=(2^n-1)\cdot M_k(n)+M_{k-1}(n)$. Consider the linear map $\psi_k:U_k\to \RR^N$ sending each polynomial $P\in U_k$ to the $N$-tuple consisting of the derivatives of $P$ of order less than $k$ at all points in $\{0,1\}^n\sm \{(0,\dots,0)\}$ and all the derivatives of $P$ of order less than $k-1$ at $(0,\dots,0)$. Then  $\psi_k:U_k\to \RR^N$ is an isomorphism.
\end{claim}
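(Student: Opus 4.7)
The plan is to derive the claim directly from Claim \ref{claim-dim-Uk} and Claim \ref{claim-reduced-exists}, which together already do essentially all the work. Since $\dim U_k = N$ by Claim \ref{claim-dim-Uk}, it suffices to prove that $\psi_k$ is surjective. I would deliberately avoid attacking injectivity, because showing directly that a nonzero $k$-reduced polynomial cannot satisfy the required vanishing conditions is essentially the lower-bound half of Theorem \ref{thm-2} restricted to $U_k$, which is precisely the hard statement this whole framework is being set up to attack.

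For surjectivity, I would extend $\psi_k$ to a linear map $\widetilde\psi_k\colon \RR[x_1,\dots,x_n]\to \RR^N$ using the same formula, i.e.\ recording the same list of partial derivatives at the same points, but now for an arbitrary polynomial. The first subtask is to show $\widetilde\psi_k$ is surjective; this is a standard Hermite-type interpolation problem on the finite set $\{0,1\}^n$. Explicit building blocks are easy to produce: for each $a\in\{0,1\}^n$, the polynomial
\[
g_a(x)=\prod_{i:\,a_i=0}(x_i-1)^{k}\cdot\prod_{i:\,a_i=1}x_i^{k}
\]
vanishes to order at least $k$ at every point of $\{0,1\}^n$ other than $a$ and is nonzero at $a$. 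Multiplying $g_a$ by arbitrary polynomials $h$ and applying Leibniz shows that, by choosing $h$ whose Taylor expansion at $a$ up to order $k-1$ (respectively $k-2$, when $a=(0,\dots,0)$) is arbitrary, we can realize any prescribed values of the low-order partial derivatives at $a$ while leaving the derivative data at all other points of $\{0,1\}^n$ unchanged. Summing such contributions over $a$ shows that every vector in $\RR^N$ lies in the image of $\widetilde\psi_k$.

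Now, given any target $v\in\RR^N$, choose some $Q\in\RR[x_1,\dots,x_n]$ (of arbitrarily high degree, which is harmless) with $\widetilde\psi_k(Q)=v$, and apply Claim \ref{claim-reduced-exists} to $Q$. This produces $P\in U_k$ such that $Q-P$ vanishes to order at least $k$ at every point of $\{0,1\}^n\sm\{(0,\dots,0)\}$ and to order at least $k-1$ at $(0,\dots,0)$. These vanishing conditions are exactly the statement $\widetilde\psi_k(Q-P)=0$, so $\psi_k(P)=\widetilde\psi_k(P)=\widetilde\psi_k(Q)=v$, giving surjectivity of $\psi_k$. Combined with $\dim U_k=N$, this forces $\psi_k$ to be an isomorphism.

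The only step in this plan requiring any attention is the Hermite interpolation constructing $Q$ from $v$, but this is entirely routine and I do not anticipate any real obstacle: all the genuine combinatorial content of the claim is packaged into Claim \ref{claim-reduced-exists} (for the reduction) and Claim \ref{claim-dim-Uk} (for the dimension count).
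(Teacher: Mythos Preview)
Your proposal is correct and follows essentially the same approach as the paper: use Claim~\ref{claim-dim-Uk} for $\dim U_k=N$, prove surjectivity by first constructing a high-degree interpolant $Q$ hitting an arbitrary target in $\RR^N$, and then reduce $Q$ into $U_k$ via Claim~\ref{claim-reduced-exists}. The only cosmetic difference is the choice of interpolation building blocks---the paper uses summands of the form $(x_1-a_1)^{m_1}\dotsm(x_n-a_n)^{m_n}\cdot\prod_i((x_i-a_i)^{2k}-1)^{2k}$ in place of your $g_a\cdot h$---but both are standard Hermite-type constructions and the overall argument is the same.
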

\begin{proof}
By Claim \ref{claim-dim-Uk} we have $\dim U_k=(2^n-1)\cdot M_k(n)+M_{k-1}(n)=N$, so it suffices to prove that the linear map $\psi_k:U_k\to \RR^N$ is surjective. The surjectivity of $\psi_k$ follows from Claim \ref{claim-reduced-exists}: indeed, for any $N$-tuple $\alpha\in \RR^N$, we can easily construct a (high-degree) polynomial $Q\in \RR[x_1,\dots,x_n]$ such that the derivatives of $Q$ of order less than $k$ at all points in $\{0,1\}^n\sm \{(0,\dots,0)\}$ and the derivatives of $Q$ of order less than $k-1$ at $(0,\dots,0)$ form precisely the $N$-tuple $\alpha$. For example, one can take
\[Q=\sum \frac{\alpha_{(a_1,\dots,a_n),(m_1,\dots,m_n)}}{m_1!\dotsm m_n!}\cdot (x_1-a_1)^{m_1}\dotsm (x_n-a_n)^{m_n}\cdot ((x_1-a_1)^{2k}-1)^{2k}\dotsm ((x_n-a_n)^{2k}-1)^{2k},\]
where the sum is over all entries $\alpha_{(a_1,\dots,a_n),(m_1,\dots,m_n)}$ of $\alpha$ (which correspond to conditions of the form $(\partial_{x_1})^{m_1}\dots (\partial_{x_n})^{m_n} Q(a_1,\dots,a_n)=\alpha_{(a_1,\dots,a_n),(m_1,\dots,m_n)}$ with $(a_1,\dots,a_n)\in \{0,1\}^n$).
Now, by Claim \ref{claim-reduced-exists} there exists a polynomial $P\in U_k$ where this $N$-tuple of derivatives agrees with the $N$-tuple for $Q$. Hence $\psi_k(P)=\alpha$.
\end{proof}

\begin{corollary}\label{corollary-reduced-unique}
Let $k\geq 2$ and $n\geq 1$. For every polynomial $Q\in \RR[x_1,\dots,x_n]$ there is at most one polynomial $P\in U_k$ such that $Q-P$ has zeroes of multiplicity at least $k$ at all points in $\{0,1\}^n\sm \{(0,\dots,0)\}$ and a zero of multiplicity at least $k-1$ at $(0,\dots,0)$. In particular, the polynomial $P$ in Claim \ref{claim-reduced-exists} is unique.
\end{corollary}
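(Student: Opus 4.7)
The plan is to deduce this uniqueness statement directly from the injectivity half of Claim \ref{claim-isomorphism-evaluations}. Suppose $P_1, P_2 \in U_k$ are two polynomials such that both $Q - P_1$ and $Q - P_2$ have zeroes of multiplicity at least $k$ at all points of $\{0,1\}^n \setminus \{(0,\dots,0)\}$ and a zero of multiplicity at least $k-1$ at $(0,\dots,0)$. I would consider the difference $P_1 - P_2 = (Q - P_2) - (Q - P_1)$, which by linearity of differentiation inherits the same vanishing conditions: zeroes of multiplicity at least $k$ at all points of $\{0,1\}^n \setminus \{(0,\dots,0)\}$, and a zero of multiplicity at least $k-1$ at $(0,\dots,0)$.

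Since $U_k$ is a vector subspace of $\RR[x_1,\dots,x_n]$ (it is defined by the conditions $\deg P \le n+2k-3$ and that no monomial of $P$ is divisible by $k$ squares of variables, both of which are preserved under linear combinations), the difference $P_1 - P_2$ lies in $U_k$. The vanishing conditions on $P_1 - P_2$ say exactly that every coordinate of $\psi_k(P_1 - P_2)\in \RR^N$ is zero, i.e.\ $\psi_k(P_1 - P_2) = 0$. By Claim \ref{claim-isomorphism-evaluations}, the map $\psi_k$ is an isomorphism and hence injective, so $P_1 - P_2 = 0$, giving $P_1 = P_2$.

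The ``in particular'' clause follows immediately, since Claim \ref{claim-reduced-exists} already guarantees that at least one such $P$ exists, and the argument above shows that there is at most one. There is no real obstacle here — the entire content of the corollary has been packaged into the isomorphism statement of Claim \ref{claim-isomorphism-evaluations}, and this proof is essentially a one-line application of injectivity. The only minor care needed is to verify that $U_k$ is closed under subtraction, which is immediate from its definition as the span of the monomials not divisible by a product of $k$ squared variables and of degree at most $n+2k-3$.
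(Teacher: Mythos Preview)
Your proof is correct and takes essentially the same approach as the paper: both consider the difference of two hypothetical solutions, observe that it lies in $U_k$ and is in the kernel of $\psi_k$, and conclude from Claim \ref{claim-isomorphism-evaluations} that the difference is zero. The only difference is cosmetic---you spell out explicitly that $U_k$ is closed under subtraction, while the paper takes this for granted.
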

\begin{proof}
Suppose that for some polynomial $Q\in \RR[x_1,\dots,x_n]$ there were two different polynomials $P, P^*\in U_k$ with the desired properties. Then the non-zero polynomial $P-P^*\in U_k$ would have zeroes of multiplicity at least $k$ at all points in $\{0,1\}^n\sm \{(0,\dots,0)\}$ and a zero of multiplicity at least $k-1$ at $(0,\dots,0)$, and would therefore be mapped to $(0,0,\dots,0)\in \RR^N$ under the isomorphism $\psi_k:U_k\to \RR^N$ in Claim \ref{claim-isomorphism-evaluations}. This is a contradiction.
\end{proof}

With these preparations, we are now ready to prove Theorem \ref{thm-3}.

\begin{proof}[Proof of Theorem \ref{thm-3}]
Suppose for contradiction that there is a polynomial $Q\in \RR[x_1,\dots,x_n]$ of degree $\deg Q\leq n+2k-3$ having zeroes of multiplicity at least $k$ at all points in $\{0,1\}^n\sm \{(0,\dots,0)\}$, and a zero of multiplicity exactly $k-1$ at $(0,\dots,0)$. By the second part of Claim \ref{claim-reduced-exists}, there is a polynomial $P\in U_k$ also having zeroes of multiplicity at least $k$ at all points in $\{0,1\}^n\sm \{(0,\dots,0)\}$, and a zero of multiplicity exactly $k-1$ at $(0,\dots,0)$. In particular, $P$ is not the zero polynomial. However, under the isomorphism $\psi_k: U_k\to \RR^N$ in Claim \ref{claim-isomorphism-evaluations}, the polynomial $P$ is mapped to  $(0,0,\dots,0)\in \RR^N$ (recall that the derivatives of order $k-1$ at $(0,\dots,0)$ are not recorded by $\psi_k$). This is a contradiction.

For the second part of the theorem, note that the polynomial
\[P(x_1,\dots,x_n)=x_1^{k-1}(x_1-1)^{k-1}\cdot (x_1-1)\dotsm(x_n-1)\]
has degree $\deg P=n+2k-2$ and satisfies the desired conditions.
\end{proof}

It remains to prove Theorem \ref{thm-2}. The second part of Theorem \ref{thm-2} (concerning the existence of $P$ with $\deg P=n+2k-3$ with the desired conditions) is a direct consequence of Claim \ref{claim-reduced-exists}, as we will see below. However, the first part of Theorem \ref{thm-2} is much more challenging.

For $k\geq 2$, let $V_k\su U_k\su \RR[x_1,\dots,x_n]$ be the vector space of all $k$-reduced polynomials which have zeroes of multiplicity at least $k$ at all points in $\{0,1\}^n\sm \{(0,\dots,0)\}$. In order to prove the first part of Theorem \ref{thm-2}, our goal will be to show that all non-zero polynomials $P\in V_k$ have degree $n+2k-3$. We continue with our sequence of claims.

\begin{claim}\label{claim-dim-Vk}
Let $k\geq 2$ and $n\geq 1$. Then the vector space $V_k\su \RR[x_1,\dots,x_n]$ has dimension $M_{k-1}(n)$.
\end{claim}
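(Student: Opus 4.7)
The plan is to deduce this dimension count directly from the isomorphism $\psi_k:U_k\to \RR^N$ established in Claim \ref{claim-isomorphism-evaluations}. The key observation is that membership in $V_k$ is exactly a vanishing condition on a specified subset of the coordinates of $\psi_k$.

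First I would unpack the definition: a polynomial $P\in U_k$ lies in $V_k$ if and only if $P$ has a zero of multiplicity at least $k$ at every point of $\{0,1\}^n\sm\{(0,\dots,0)\}$, which by definition means that every derivative of $P$ of order less than $k$ vanishes at every such point. Now recall that $\psi_k$ records precisely two kinds of data: (i) all derivatives of order less than $k$ at each point in $\{0,1\}^n\sm\{(0,\dots,0)\}$, contributing $(2^n-1)\cdot M_k(n)$ coordinates, and (ii) all derivatives of order less than $k-1$ at $(0,\dots,0)$, contributing $M_{k-1}(n)$ coordinates. Thus $P\in V_k$ if and only if the coordinates of type (i) in $\psi_k(P)$ are all zero, while the coordinates of type (ii) are unconstrained.

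Consequently, under the identification $\RR^N \cong \RR^{(2^n-1)\cdot M_k(n)}\oplus\RR^{M_{k-1}(n)}$, we have
\[
V_k \;=\; \psi_k^{-1}\!\left(\{0\}^{(2^n-1)\cdot M_k(n)}\times \RR^{M_{k-1}(n)}\right).
\]
Since $\psi_k$ is a linear isomorphism by Claim \ref{claim-isomorphism-evaluations}, it restricts to a linear isomorphism between $V_k$ and this subspace of $\RR^N$. The latter subspace has dimension $M_{k-1}(n)$, so $\dim V_k = M_{k-1}(n)$, as claimed.

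There is no real obstacle here; the work has already been done in Claims \ref{claim-reduced-exists} and \ref{claim-isomorphism-evaluations}. Once those are in hand, the dimension of $V_k$ is just the dimension of the kernel of the composition of $\psi_k$ with projection onto the type-(i) coordinates, and surjectivity of $\psi_k$ gives this kernel the expected dimension $N - (2^n-1)\cdot M_k(n) = M_{k-1}(n)$.
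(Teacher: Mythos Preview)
Your proof is correct and is essentially identical to the paper's own argument: both observe that $V_k$ is precisely the preimage under the isomorphism $\psi_k$ of the coordinate subspace of $\RR^N$ where the $(2^n-1)\cdot M_k(n)$ entries corresponding to derivatives at points of $\{0,1\}^n\sm\{(0,\dots,0)\}$ vanish, and conclude $\dim V_k = N - (2^n-1)\cdot M_k(n) = M_{k-1}(n)$.
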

\begin{proof}
Note that $V_k$ consists precisely of those polynomials in $U_k$ which are mapped to $N$-tuples under the isomorphism $\psi_k: U_k\to \RR^N$ in Claim \ref{claim-isomorphism-evaluations} where the first $(2^n-1)\cdot M_k(n)$ entries are zero. Since the subspace of such $N$-tuples has dimension $N-(2^n-1)\cdot M_k(n)=M_{k-1}(n)$, the subspace $V_k\su U_k$ also has dimension $M_{k-1}(n)$.
\end{proof}

\begin{claim}\label{claim-multiplication-increase-k}
Let $k\geq 3$ and $n\geq 1$. Then for each $j\in \{1,\dots,n\}$ and each polynomial $P\in V_{k-1}$, we have $x_j(x_j-1)\cdot P\in V_k$.
\end{claim}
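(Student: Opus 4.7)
The plan is to set $Q := x_j(x_j-1)\cdot P$ and verify directly the two conditions that define $V_k$: the vanishing to order at least $k$ at every point of $\{0,1\}^n\sm\{(0,\dots,0)\}$, and membership in $U_k$ (namely $\deg Q\leq n+2k-3$ and no monomial of $Q$ divisible by a product of $k$ squared variables).

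The multiplicity condition is the easy half. At any $a\in\{0,1\}^n$ the factor $x_j(x_j-1)$ vanishes to order at least $1$, because either $a_j=0$ or $a_j=1$. Since $P\in V_{k-1}$ vanishes to order at least $k-1$ at every point of $\{0,1\}^n\sm\{(0,\dots,0)\}$, the standard product rule for vanishing multiplicities gives that $Q$ vanishes to order at least $1+(k-1)=k$ at each such point.

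For $k$-reducedness of $Q$, the degree bound is immediate: $\deg Q\leq \deg P+2\leq (n+2(k-1)-3)+2=n+2k-3$. The monomial condition is the one point that requires a little care, and the plan is to use the following reformulation (a direct consequence of unpacking the definition): for a monomial $x_1^{c_1}\dotsm x_n^{c_n}$, being divisible by $x_{i_1}^2\dotsm x_{i_k}^2$ for some indices $i_1,\dots,i_k\in\{1,\dots,n\}$ is equivalent to its ``square sum'' $\sum_i\lfloor c_i/2\rfloor$ being at least $k$. Under this reformulation, $P\in U_{k-1}$ says exactly that every monomial of $P$ has square sum at most $k-2$. Multiplying a monomial by $x_j$ increases its square sum by $0$ or $1$ (depending on the parity of the exponent of $x_j$), and multiplying by $x_j^2$ increases it by exactly $1$. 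Hence every monomial appearing in $x_j^2\cdot P$ or in $x_j\cdot P$, and therefore every monomial of $Q=x_j^2\cdot P-x_j\cdot P$, has square sum at most $k-1$. This is exactly the $k$-reducedness condition.

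No step poses a real obstacle; the only ingredient beyond the definitions is the square-sum reformulation, and the multiplicity claim is a one-line application of the product rule. Both halves combine to give $Q\in V_k$.
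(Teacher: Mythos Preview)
Your proof is correct and follows essentially the same approach as the paper: verify the degree bound, the monomial condition, and the multiplicity condition separately. The paper leaves the monomial condition as ``not hard to see,'' whereas you make it explicit via the square-sum reformulation $\sum_i\lfloor c_i/2\rfloor$; this is a nice clean way to carry out that step but does not constitute a different route.
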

\begin{proof}
Recall that $P\in V_{k-1}\su U_{k-1}$ is $(k-1)$-reduced, meaning that $\deg P\leq n+2k-5$ and no monomial of $P$ is divisible by $x_{i_1}^2\dotsm x_{i_{k-1}}^2$ for any (not necessarily distinct) indices $i_1,\dots,i_{k-1}\in \{1,\dots,n\}$. Then clearly the polynomial $x_j(x_j-1)\cdot P$ has degree at most $n+2k-3$, and it is also not hard to see that no monomial of $x_j(x_j-1)\cdot P$ is divisible by $x_{i_1}^2\dotsm x_{i_{k}}^2$ for any (not necessarily distinct) indices $i_1,\dots,i_{k}\in \{1,\dots,n\}$. Hence $x_j(x_j-1)\cdot P\in U_k$.

Since $P\in V_{k-1}$, the polynomial $P$ has zeroes of multiplicity at least $k-1$ at all points in $\{0,1\}^n\sm \{(0,\dots,0)\}$. Hence $x_j(x_j-1)\cdot P$ has zeroes of multiplicity at least $k$ at all these points, and we can conclude that $x_j(x_j-1)\cdot P\in V_k$.
\end{proof}

Now, in order to show that all non-zero polynomials $P\in V_k$ have degree $n+2k-3$, let us consider the linear map $\varphi_k:V_k\to \RR[x_1,\dots,x_n]$ sending each polynomial in $V_k$ to its homogeneous degree $n+2k-3$ part. Our goal is to show that this map is injective. To show this, it is sufficient to prove that the image $\varphi_k(V_k)$ has dimension $\dim V_k=M_{k-1}(n)$.

Let $W_k\su \RR[x_1,\dots,x_n]$ be the subspace spanned by all polynomials of the form
\begin{equation}\label{eq-generators-Wk}
x_1\dotsm x_n \cdot (x_1^m+\dots+x_n^m)\cdot x_1^{2d_1}\dotsm x_n^{2d_n}
\end{equation}
for non-negative integers $(m,d_1,\dots,d_n)$  with  $m+2(d_1+\dots+d_n)=2k-3$. Note that all polynomials in $W_k$ are homogeneous of degree $n+2k-3$. We will later show that $\varphi_k(V_k)=W_k$.

\begin{claim}\label{claim-basis-Wk}
Let $k\geq 2$ and $n\geq k-1$. Then the polynomials in (\ref{eq-generators-Wk}) form a basis of $W_k$.
\end{claim}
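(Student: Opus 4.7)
Since the listed polynomials span $W_k$ by definition, the content of the claim is their linear independence. The plan is to reparametrize the generators, reduce to an equivalent problem in a polynomial ring in the ``squared'' variables $y_i := x_i^2$, and then induct. Because $2k-3$ is odd while $2(d_1+\cdots+d_n)$ is even, the exponent $m$ must be odd; writing $m = 2t+1$ with $t \ge 0$, the defining relation becomes $t + d_1 + \cdots + d_n = k-2$. Each generator is thus indexed by a pair $(t,\mu)$, where $t \ge 0$ and $\mu = y_1^{d_1}\cdots y_n^{d_n}$ is a monomial of degree $k-2-t$ in $\RR[y_1,\dots,y_n]$. A hockey-stick count gives that the total number of such pairs is $\binom{n+k-2}{k-2} = M_{k-1}(n)$, matching the dimension predicted by Claim \ref{claim-dim-Vk}.

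With this reparametrization and using $x_i^m = x_i \cdot y_i^t$, we have $G_{m,d} = x_1 \cdots x_n \cdot \mu(y) \cdot \sum_{i=1}^n x_i y_i^t$. Since $\RR[x_1,\dots,x_n]$ is a free module over $\RR[y_1,\dots,y_n]$ with basis $\{x_1^{\varepsilon_1}\cdots x_n^{\varepsilon_n} : \varepsilon \in \{0,1\}^n\}$, and since $x_1\cdots x_n$ is a nonzerodivisor, the generator $G_{m,d}$ is uniquely encoded by the $n$-tuple
\[
\bigl(\mu y_1^t,\ \mu y_2^t,\ \ldots,\ \mu y_n^t\bigr) \in \RR[y_1,\dots,y_n]^n,
\]
namely the list of coefficients in the summands $x_i \cdot \RR[y_1,\dots,y_n]$. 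It therefore suffices to prove that these $n$-tuples, as $(t,\mu)$ varies, are linearly independent.

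To this end, I would suppose $\sum_{(t,\mu)} c_{t,\mu}(\mu y_1^t,\dots,\mu y_n^t) = 0$. Fixing $i$ and writing each $\mu$ uniquely as $y_i^a \mu'$ with $\mu' \in \RR[y_j : j \ne i]$, the $i$-th component, once the coefficient of $y_i^l \mu'$ is extracted, gives
\[
\sum_{t=0}^{l} c_{t,\, y_i^{l-t}\mu'} = 0
\]
for every $i \in \{1,\dots,n\}$, every $l \ge 0$, and every monomial $\mu' \in \RR[y_j : j \ne i]$ of degree $k-2-l$. I would then induct on $t$: taking $l = t$, the inductive hypothesis annihilates all terms with $t' < t$, leaving $c_{t,\mu'} = 0$ for every $\mu' \in \RR[y_j : j \ne i]$ of degree $k-2-t$. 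Taking the union over $i$ yields $c_{t,\mu} = 0$ for every monomial $\mu$ of degree $k-2-t$ that misses at least one variable $y_i$.

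The hypothesis $n \ge k-1$ is exactly what closes the induction: any monomial of degree at most $k-2$ involves at most $k-2 \le n-1$ distinct variables and therefore must miss some $y_i$. I expect this to be the only genuinely delicate point of the argument; the rest is bookkeeping, and the algebraic rewriting via $y_i = x_i^2$ is what makes the combinatorial structure transparent enough for the induction to go through.
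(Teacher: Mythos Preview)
Your proof is correct. The key idea---that for each fixed value of $t$ (equivalently $m$), the monomial $\mu$ has degree at most $k-2 \le n-1$ and therefore misses some variable $y_i$, allowing one to isolate the coefficient $c_{t,\mu}$ by looking at the $i$-th component---is exactly the idea in the paper's proof. The paper argues directly at the level of monomials in the $x_i$: it observes that in each $Q_{m,d_1,\dots,d_n}$ exactly one variable carries an odd exponent, that this odd exponent is at least $m$, and then takes the minimal $m^*$ among terms with nonzero coefficient and an index $i$ with $d_i=0$ to exhibit a monomial that cannot cancel. Your reparametrization $y_i=x_i^2$ and the free-module decomposition of $\RR[x]$ over $\RR[y]$ are a clean repackaging of that same observation (the ``odd-exponent variable'' becomes the basis element $x_i$, and the remaining even exponents live in $\RR[y]$), and your explicit induction on $t$ is the unwound version of the paper's minimal-$m^*$ argument. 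So the two proofs are essentially the same; yours is somewhat more structural, the paper's more hands-on, but neither buys anything the other does not.
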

\begin{proof}
By definition of $W_k$, the polynomials in (\ref{eq-generators-Wk}) span $W_k$. It remains to show that these polynomials are linearly independent. Since we can divide all the polynomials by $x_1\dotsm x_n$, it suffices to show that the polynomials of the form
\[Q_{m,d_1,\dots,d_n}=(x_1^m+\dots+x_n^m)\cdot x_1^{2d_1}\dotsm x_n^{2d_n}\]
for non-negative integers $(m,d_1,\dots,d_n)$  with  $m+2(d_1+\dots+d_n)=2k-3$ are linearly independent.

Note that for each such polynomial $Q_{m,d_1,\dots,d_n}$ we have that $m$ is odd and that $d_1+\dots+d_n\leq k-2<n$. Hence all monomials of $Q_{m,d_1,\dots,d_n}$ have exactly one variable with an odd exponent, this odd exponent is always at least $m$, and there exists some monomial where the odd exponent is equal to $m$ (namely each of the monomials $x_i^m\cdot x_1^{2d_1}\dotsm x_n^{2d_n}$ for those $i\in \{1,\dots,n\}$ with $d_i=0$, which must exist since $d_1+\dots+d_n<n$).

Now suppose we had a linear dependence relationship
\[\sum_{(m,d_1,\dots,d_n)}\lambda_{m,d_1,\dots,d_n}Q_{m,d_1,\dots,d_n}=0\]
for some coefficients $\lambda_{m,d_1,\dots,d_n}\in \RR$ not all of which are zero. Then let $m^*$ be the minimum value of $m$ for which some coefficient $\lambda_{m,d_1,\dots,d_n}$ is non-zero (and note that $m^*$ is odd, since $m$ is always odd). Furthermore, fix $i\in \{1,\dots,n\}$ such that we have $\lambda_{m,d_1,\dots,d_n}\neq 0$ for some $(m,d_1,\dots,d_n)$ with $m=m^*$ and $d_i=0$.

Now, in the polynomials $Q_{m,d_1,\dots,d_n}$ with $\lambda_{m,d_1,\dots,d_n}\neq 0$, all monomials have exactly one variable with an odd exponent, and this odd exponent is always at least $m^*$. Let us consider all the monomials where $x_i$ has exponent $m^*$. These monomials can only appear in polynomials $Q_{m,d_1,\dots,d_n}$ with $\lambda_{m,d_1,\dots,d_n}\neq 0$ where $m=m^*$ and $d_i=0$. But note that each such polynomial contains precisely one monomial where $x_i$ has exponent $m=m^*$, namely the monomial $x_i^{m^*}\cdot x_1^{2d_1}\dotsm x_n^{2d_n}$. Hence these monomials cannot cancel out between different polynomials $Q_{m,d_1,\dots,d_n}$ with $\lambda_{m,d_1,\dots,d_n}\neq 0$. Thus, a linear dependence relationship between the polynomials $Q_{m,d_1,\dots,d_n}$ is not possible.
\end{proof}

\begin{corollary}\label{coro-dim-Wk}
Let $k\geq 2$ and $n\geq k-1$. Then the vector space $W_k\su \RR[x_1,\dots,x_n]$ has dimension $M_{k-1}(n)$.
\end{corollary}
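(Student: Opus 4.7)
The plan is to directly count the basis of $W_k$ provided by Claim \ref{claim-basis-Wk} and match this count with the definition of $M_{k-1}(n)$ via an explicit bijection. Since the hypothesis $n \geq k-1$ is precisely what is needed to invoke Claim \ref{claim-basis-Wk}, we know that $\dim W_k$ equals the number of tuples $(m, d_1, \ldots, d_n)$ of non-negative integers satisfying $m + 2(d_1 + \cdots + d_n) = 2k-3$.

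First I would observe the parity constraint: since $2k-3$ is odd while $2(d_1 + \cdots + d_n)$ is even, the exponent $m$ must be odd. Writing $m = 2e+1$ for a non-negative integer $e$, the defining equation becomes $e + d_1 + \cdots + d_n = k-2$. Thus $\dim W_k$ equals the number of $(n+1)$-tuples $(e, d_1, \ldots, d_n)$ of non-negative integers summing to $k-2$.

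The final step is the bijection $(e, d_1, \ldots, d_n) \leftrightarrow (d_1, \ldots, d_n)$, where $e$ is recovered as $e = k-2 - (d_1 + \cdots + d_n)$. This is a bijection between $(n+1)$-tuples of non-negative integers with sum exactly $k-2$ and $n$-tuples of non-negative integers with sum at most $k-2$ (equivalently, strictly less than $k-1$). By the definition of $M_{k-1}(n)$ given just before Claim \ref{claim-dim-Uk}, this latter count is precisely $M_{k-1}(n)$, which completes the proof.

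This argument is entirely routine once the basis from Claim \ref{claim-basis-Wk} is in hand, so there is no real obstacle; the only thing to be careful about is the parity observation that forces $m$ to be odd, which both uses up the odd factor in $2k-3$ and produces exactly the shift needed for the bijection to land on $M_{k-1}(n)$ rather than some other $M_t(n)$.
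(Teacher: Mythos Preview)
Your proof is correct and follows essentially the same approach as the paper: both invoke Claim~\ref{claim-basis-Wk} and then set up the obvious bijection between tuples $(m,d_1,\dots,d_n)$ with $m+2(d_1+\cdots+d_n)=2k-3$ and $n$-tuples $(d_1,\dots,d_n)$ with $d_1+\cdots+d_n<k-1$. The paper phrases this by noting that $m$ is uniquely determined once $(d_1,\dots,d_n)$ is chosen, while you make the parity of $m$ explicit via the substitution $m=2e+1$; these are the same argument.
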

\begin{proof}
By Claim \ref{claim-basis-Wk}, the dimension of $W_k$ equals the number of polynomials of the form as in (\ref{eq-generators-Wk}), i.e.\ the number of choices of non-negative integers $(m,d_1,\dots,d_n)$ with $m+2(d_1+\dots+d_n)=2k-3$. We must have $d_1+\dots+d_n<k-1$. So there are $M_{k-1}(n)$ possibilities to choose $(d_1,\dots,d_n)$, and for each of them $m=2k-3-2(d_1+\dots+d_n)$ is uniquely determined.
\end{proof}

A key step in the proof of Theorem \ref{thm-2} is to prove the following proposition, from which we will deduce that the map $\varphi_k: V_k\to \RR[x_1,\dots,x_n]$ is injective.

\begin{proposition}\label{propo-key}
Let $k\geq 2$ and $n\geq 2k-3$. Then there exists a polynomial $P\in V_k$ such that the homogeneous degree $n+2k-3$ part $\varphi_k(P)$ of $P$ satisfies $\varphi_k(P)\in W_k$ as well as the following condition: if we write $\varphi_k(P)\in W_k$ in terms of the basis in (\ref{eq-generators-Wk}), then the coefficient of the basis element $x_1\dotsm x_n \cdot (x_1^{2k-3}+\dots+x_n^{2k-3})$ is non-zero.
\end{proposition}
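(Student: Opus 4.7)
The plan is to construct an explicit polynomial $P\in V_k$ by applying Claim \ref{claim-reduced-exists} to a carefully chosen auxiliary polynomial $Q$ that vanishes to order at least $k$ at every non-origin point of $\{0,1\}^n$, and then to compute the top-degree part $\varphi_k(P)$ explicitly. For the construction of $Q$, I would take $Q=g(x)\cdot\prod_{i=1}^n(x_i-1)$, where $g$ is a symmetric polynomial of degree $2k-3$ designed so that for each $j\in\{1,\dots,k-1\}$, $g$ vanishes to order at least $k-j$ at every point of $\{0,1\}^n$ with support of size $j$. Since $\prod(x_i-1)$ vanishes to order equal to the support size at each non-origin point, this forces $Q$ to vanish to order at least $k$ at every non-origin point. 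A natural candidate is $g=(p_2-p_1)^c\cdot\prod_j(j-p_1)^{a_j}$ with appropriate exponents (where $p_m=\sum_i x_i^m$); the key point is that $p_2-p_1=\sum_i x_i(x_i-1)$ vanishes identically to order $1$ on $\{0,1\}^n$, allowing more efficient use of degree than with linear factors alone. Applying the second part of Claim \ref{claim-reduced-exists} to this $Q$ (which has degree $n+2k-3$) yields the desired $P\in V_k$.

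To compute $\varphi_k(P)$, observe that since $\deg Q\leq n+2k-3$, the reduction uses only step~(a), which replaces each bad monomial by terms of strictly lower degree without introducing any new monomials at the top degree. Hence $\varphi_k(P)$ equals the homogeneous degree $n+2k-3$ component of $Q$ with all monomials divisible by $x_{i_1}^2\cdots x_{i_k}^2$ removed. Using Newton's identities to convert between elementary and power sum symmetric polynomials, this yields an explicit symmetric-function expression for $\varphi_k(P)$ in terms of $p_1,p_2,\dots$ and $e_n=x_1\cdots x_n$.

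The final and most delicate step is to verify that $\varphi_k(P)$ lies in $W_k$ and to determine the coefficient of the basis element $e_np_{2k-3}=x_1\cdots x_n\cdot(x_1^{2k-3}+\cdots+x_n^{2k-3})$ in the expansion with respect to the basis~(\ref{eq-generators-Wk}). After tracking contributions across the basis elements, this coefficient reduces to a single alternating binomial sum. The main obstacle — and the origin of the Catalan numbers referenced in the introduction — is recognizing this sum as, up to sign, the Catalan number $C_{k-1}=\frac{1}{k}\binom{2k-2}{k-1}$, presumably via a bijective enumerative argument. Since $C_{k-1}>0$, the coefficient is non-zero, establishing the proposition.
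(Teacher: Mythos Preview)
Your overall strategy---construct an auxiliary $Q$ that vanishes to order $k$ on $\{0,1\}^n\setminus\{0\}$, reduce via Claim~\ref{claim-reduced-exists}, and analyze the top-degree part---matches the paper's. However, your specific construction fails to deliver the conclusion for $k\geq 3$.

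With your candidate $g=(p_2-p_1)^c\prod_j(j-p_1)^{a_j}$ one can indeed meet the degree and vanishing constraints (for instance $c=k-2$, $a_1=1$ works, giving $\deg g=2k-3$). But then the homogeneous top part of $Q=g\cdot\prod_i(x_i-1)$ is $\pm\, e_n\, p_1^{\,1+\sum a_j}\, p_2^{\,c}$, a polynomial in $p_1,p_2$ times $e_n$. A direct check shows that every monomial in this top part has $\sum_\ell \lfloor e_\ell/2\rfloor=k-1$, so there are no bad monomials to remove, and hence $\varphi_k(P)$ equals this top part exactly. Expanding $p_2^{\,c}$ into monomials $x_1^{2d_1}\cdots x_n^{2d_n}$ expresses $\varphi_k(P)$ as a combination of basis elements~(\ref{eq-generators-Wk}) all having $m=1$. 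The coefficient of the basis element $e_n\, p_{2k-3}$ (which has $m=2k-3$) is therefore \emph{zero} for every $k\geq 3$. Your $P$ lies in $V_k$ and has $\varphi_k(P)\in W_k$, but it does not witness the proposition. (The Catalan index you quote, $C_{k-1}$ rather than $C_{k-2}$, also suggests the final computation was not actually carried out.)

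The deeper issue is structural: if you insist on $\deg Q=n+2k-3$, then $\varphi_k(P)$ is (up to removing bad monomials) just the top-degree part of $Q$ itself, so you must \emph{already} build $p_{2k-3}$ into $g$---which is essentially what you are trying to prove is possible. The paper avoids this circularity by taking $Q=(-1)^{(k-1)n}\prod_i(x_i-1)^k$ of degree $kn$. The reduction to $U_k$ is then genuinely nontrivial; via an expansion of $(x_i-1)^k$ in the basis $x_i^m(x_i-1)^m,\,x_i^m(x_i-1)^{m+1}$ (Claim~\ref{claim-power-xi-1}), the resulting $\varphi_k(P)$ involves all odd power sums up to $p_{2k-3}$, and its $e_n p_{2k-3}$-coefficient is computed (Lemmas~\ref{lemma-coefficient} and~\ref{lemma-catalan}) to be $(-1)^{k-1}C_{k-2}\neq 0$.
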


We postpone the proof of Proposition \ref{propo-key} to Section \ref{sect-proof-proposition}. The proposition implies the following corollaries.

\begin{corollary}\label{coro-Wk-in-image}
Let $k\geq 2$ and $n\geq 2k-3$. Then we have $W_k\su \varphi_k(V_k)$.
\end{corollary}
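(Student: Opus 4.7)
My plan is to prove the inclusion $W_k \subseteq \varphi_k(V_k)$ by induction on $k$, leveraging the basis of $W_k$ from Claim \ref{claim-basis-Wk}, the multiplication-by-$x_j(x_j-1)$ lifting from Claim \ref{claim-multiplication-increase-k}, and the key non-vanishing statement in Proposition \ref{propo-key}. Write a general basis element of $W_k$ as
\[ B_{m,d_1,\dots,d_n} := x_1 \cdots x_n \cdot (x_1^m + \dots + x_n^m) \cdot x_1^{2d_1} \cdots x_n^{2d_n}, \qquad m + 2(d_1 + \dots + d_n) = 2k-3. \]

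For the base case $k=2$, the equation $m + 2(d_1 + \dots + d_n) = 1$ forces $m=1$ and all $d_i = 0$, so $W_2$ is one-dimensional, spanned by $x_1 \cdots x_n (x_1 + \dots + x_n)$. Proposition \ref{propo-key} supplies a $P \in V_2$ whose image $\varphi_2(P)$ is a nonzero scalar multiple of this single basis vector, immediately yielding $W_2 \subseteq \varphi_2(V_2)$.

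For the inductive step, fix $k \geq 3$ and assume the inclusion for $k-1$. I split on whether all $d_i$ vanish. If some $d_j \geq 1$, then $(m, d_1, \dots, d_j - 1, \dots, d_n)$ satisfies the analogous constraint at level $k-1$, so by induction there exists $Q \in V_{k-1}$ with $\varphi_{k-1}(Q) = B_{m, d_1, \dots, d_j - 1, \dots, d_n}$. By Claim \ref{claim-multiplication-increase-k}, $x_j(x_j-1) Q \in V_k$, and since $\deg Q \leq n+2k-5$, the homogeneous degree $(n+2k-3)$ part of $x_j(x_j-1)Q = x_j^2 Q - x_j Q$ is obtained purely from multiplying the top-degree pieces $x_j^2$ and $\varphi_{k-1}(Q)$, giving precisely $B_{m,d_1,\dots,d_n}$. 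Hence every basis element with some $d_i \geq 1$ already lies in $\varphi_k(V_k)$.

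The remaining case is $d_1 = \dots = d_n = 0$, which forces $m = 2k-3$ and gives the distinguished basis element $B^* := x_1 \cdots x_n(x_1^{2k-3} + \dots + x_n^{2k-3})$ singled out in Proposition \ref{propo-key}. That proposition furnishes $P \in V_k$ with $\varphi_k(P) \in W_k$ whose basis expansion has a nonzero coefficient $c$ in front of $B^*$; the other basis elements appearing in this expansion have some $d_i \geq 1$ and thus lie in $\varphi_k(V_k)$ by the previous case. Subtracting appropriate scalar multiples of their preimages in $V_k$ from $P$ yields a polynomial in $V_k$ whose $\varphi_k$-image equals $c \cdot B^*$, and dividing by $c$ places $B^*$ in $\varphi_k(V_k)$, closing the induction. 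The genuine content is entirely concentrated in Proposition \ref{propo-key}, whose proof is the main obstacle and is deferred to Section \ref{sect-proof-proposition}; here the only real observation is that a single distinguished basis element in each degree suffices, because multiplication by $x_j(x_j-1)$ together with the inductive hypothesis generates all the others.
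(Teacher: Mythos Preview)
Your proof is correct and follows essentially the same approach as the paper: induction on $k$, handling basis elements with some $d_j\geq 1$ via the inductive hypothesis and Claim \ref{claim-multiplication-increase-k}, and then using Proposition \ref{propo-key} to capture the remaining basis element $x_1\cdots x_n(x_1^{2k-3}+\dots+x_n^{2k-3})$. The only cosmetic difference is that the paper packages the final step as a codimension-one subspace argument (defining $W_k'$ and noting $\varphi_k(V_k)\cap W_k\supsetneq W_k'$), whereas you explicitly subtract preimages to isolate $c\cdot B^*$; these are equivalent.
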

\begin{proof}
We prove the corollary by induction on $k$. For the base case $k=2$ note that the space $W_2$ is the one-dimensional space spanned by the polynomial $x_1\dotsm x_n \cdot(x_1+\dots+x_n)$. By Proposition \ref{propo-key} for $k=2$, there exists a polynomial $P\in V_2$ such that $\varphi_2(P)\in W_2$ is a non-zero scalar multiple of $x_1\dotsm x_n \cdot(x_1+\dots+x_n)$. This shows that $W_2\su \varphi_2(V_2)$.

Now let $k\geq 3$ and assume that we have already shown that $W_{k-1}\su \varphi_{k-1}(V_{k-1})$. Let $W_k'\su W_k$ be the subspace spanned by all polynomials of the form (\ref{eq-generators-Wk}) with $d_1+\dots+d_n\geq 1$. In other words, $W_k'\su W_k$ is spanned by all of the form (\ref{eq-generators-Wk}) except the polynomial $x_1\dotsm x_n \cdot (x_1^{2k-3}+\dots+x_n^{2k-3})$. Note that $\dim W_k'=\dim W_k -1$.

Let us first show that $W_k'\su \varphi_k(V_k)$. So fix a polynomial $x_1\dotsm x_n \cdot (x_1^m+\dots+x_n^m)\cdot x_1^{2d_1}\dotsm x_n^{2d_n}$ of the form (\ref{eq-generators-Wk}) where $d_1+\dots+d_n\geq 1$. Now, let $j\in \{1,\dots,n\}$ be chosen such that $d_j\geq 1$. Note that then (using the induction hypothesis $W_{k-1}\su \varphi_{k-1}(V_{k-1})$) we have
\[x_1\dotsm x_n \cdot (x_1^m+\dots+x_n^m)\cdot x_1^{2d_1}\dotsm x_{j-1}^{2d_{j-1}}x_{j}^{2d_{j}-2}x_{j+1}^{2d_{j+1}}\dotsm x_n^{2d_n}\in W_{k-1}\su \varphi_{k-1}(V_{k-1}).\]
Hence there exists a polynomial $P^*\in V_{k-1}$ of degree $n+2k-5$ whose homogeneous degree $n+2k-5$ part is $\varphi_{k-1}(P^*)=x_1\dotsm x_n \cdot (x_1^m+\dots+x_n^m)\cdot x_1^{2d_1}\dotsm x_{j-1}^{2d_{j-1}}x_{j}^{2d_{j}-2}x_{j+1}^{2d_{j+1}}\dotsm x_n^{2d_n}$. By Claim \ref{claim-multiplication-increase-k} we have $x_j(x_j-1)\cdot P^*\in V_k$, and note that the homogeneous degree  $n+2k-3$ part of $x_j(x_j-1)\cdot P^*$ is precisely $x_j^2\cdot \varphi_{k-1}(P^*)= x_1\dotsm x_n \cdot (x_1^m+\dots+x_n^m)\cdot x_1^{2d_1}\dotsm x_n^{2d_n}$. Hence $\varphi_k$ maps the polynomial $x_j(x_j-1)\cdot P^*\in V_k$ to $x_1\dotsm x_n \cdot (x_1^m+\dots+x_n^m)\cdot x_1^{2d_1}\dotsm x_n^{2d_n}$. This establishes that $W_k'\su \varphi_k(V_k)$.

Thus, $\varphi_k(V_k)\cap W_k$ is a subspace of $W_k$ with $W_{k}'\su \varphi_k(V_k)\cap W_k\su W_k$. Now, Proposition \ref{propo-key} states that there exists a polynomial $P\in V_k$ with $\varphi_k(P)\in W_k$ and $\varphi_k(P)\not\in W_k'$. Hence $\varphi_k(V_k)\cap W_k\neq W_k'$. Since $\dim W_k'=\dim W_k-1$, this implies that $\varphi_k(V_k)\cap W_k= W_k$ and consequently $W_k\su \varphi_k(V_k)$.
\end{proof}

\begin{corollary}\label{coro-phi-injective}
Let $k\geq 2$ and $n\geq 2k-3$. Then $\varphi_k(V_k)=W_k$, and the linear map $\varphi_k: V_k\to W_k$ is an isomorphism.
\end{corollary}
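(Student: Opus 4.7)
The plan is to finish the argument with a short dimension count, since all the substantive work has already been done in the preceding results. Specifically, Claim \ref{claim-dim-Vk} tells us that $\dim V_k = M_{k-1}(n)$, Corollary \ref{coro-dim-Wk} tells us that $\dim W_k = M_{k-1}(n)$ (this is where the hypothesis $n \geq k-1$ is needed, which follows from $n \geq 2k-3$ as long as $k \geq 2$), and Corollary \ref{coro-Wk-in-image} provides the inclusion $W_k \subseteq \varphi_k(V_k)$.

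First I would observe that as a linear image of $V_k$, the subspace $\varphi_k(V_k) \subseteq \RR[x_1,\dots,x_n]$ satisfies $\dim \varphi_k(V_k) \leq \dim V_k = M_{k-1}(n)$. Combining this with the inclusion $W_k \subseteq \varphi_k(V_k)$ from Corollary \ref{coro-Wk-in-image} and the equality $\dim W_k = M_{k-1}(n)$ from Corollary \ref{coro-dim-Wk}, the sandwich
\[ M_{k-1}(n) = \dim W_k \leq \dim \varphi_k(V_k) \leq \dim V_k = M_{k-1}(n) \]
forces equality throughout, so $\varphi_k(V_k) = W_k$.

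Second, having established that $\varphi_k$ maps $V_k$ onto $W_k$, I would note that the corestricted map $\varphi_k : V_k \to W_k$ is a surjective linear map between two finite-dimensional vector spaces of the same dimension $M_{k-1}(n)$, and hence automatically an isomorphism. There is no real obstacle here: the genuinely difficult step, namely producing a single polynomial in $V_k$ whose top-degree part has a non-zero coefficient on the ``purely odd-power'' basis element $x_1 \dotsm x_n \cdot (x_1^{2k-3} + \dots + x_n^{2k-3})$, was already carried out as Proposition \ref{propo-key} and used inside Corollary \ref{coro-Wk-in-image}. All that remains at this stage is to record the consequences.
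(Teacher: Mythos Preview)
Your proposal is correct and follows exactly the same approach as the paper: use Claim~\ref{claim-dim-Vk} and Corollary~\ref{coro-dim-Wk} to get $\dim V_k=\dim W_k=M_{k-1}(n)$, combine with the inclusion $W_k\subseteq\varphi_k(V_k)$ from Corollary~\ref{coro-Wk-in-image}, and conclude by the dimension sandwich that $\varphi_k(V_k)=W_k$ and that $\varphi_k$ is an isomorphism. The paper's own proof is essentially the same three lines, just stated more tersely.
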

\begin{proof}
Recall that $\dim V_k=M_{k-1}(n)=\dim W_k$ by Claim \ref{claim-dim-Vk} and Corollary \ref{coro-dim-Wk}. Furthermore recall that by Corollary \ref{coro-Wk-in-image} we have $W_k\su \varphi_k(V_k)$. Since $\dim \varphi_k(V_k)\leq \dim V_k=\dim W_k$, we must have $\varphi_k(V_k)=W_k$ and the linear map $\varphi_k: V_k\to W_k$ must be an isomorphism.
\end{proof}

Using Corollary \ref{coro-phi-injective}, we are now ready to prove Theorem \ref{thm-2}.

\begin{proof}[Proof of Theorem \ref{thm-2}]
Fix $k\geq 2$ and $n\geq 2k-3$, and let $Q\in \RR[x_1,\dots,x_n]$ be a polynomial having zeroes of multiplicity at least $k$ at all points in $\{0,1\}^n\sm \{(0,\dots,0)\}$, and such that $Q$ does not have a zero of multiplicity at least $k-1$ at $(0,\dots,0)$. We need to show that $\deg Q\geq n+2k-3$. Let $P$ be a polynomial as in Claim \ref{claim-reduced-exists}. Then we have $P\in U_k$ and $\deg P\leq \deg Q$. Furthermore, $P$ also has zeroes of multiplicity at least $k$ at all points in $\{0,1\}^n\sm \{(0,\dots,0)\}$, which implies that $P\in V_k$. Finally, $P$ does not have a zero of multiplicity at least $k-1$ at $(0,\dots,0)$, which in particular means that $P$ is not the zero polynomial. Recall that by Corollary \ref{coro-phi-injective} the map $\varphi_k: V_k\to W_k$ is an isomorphism (and in particular injective). Thus, we have $\varphi_k(P)\neq 0$, which means that the homogeneous degree $n+2k-3$ part of $P$ is non-zero. So we can conclude that $\deg Q\geq \deg P\geq n+2k-3$.

It remains to prove the second part of Theorem \ref{thm-2}. Fix $0\leq \ell\leq k-2$. It is not hard to construct a (high-degree) polynomial $Q$ having zeroes of multiplicity at least $k$ at all points in $\{0,1\}^n\sm \{(0,\dots,0)\}$, and such that $Q$ has a zero of multiplicity exactly $\ell$ at $(0,\dots,0)$, for example the polynomial $Q=x_1^\ell \cdot (x_1-1)^k \dotsm (x_n-1)^k $. By Claim \ref{claim-reduced-exists}, there exists a polynomial $P\in U_k$ which also has zeroes of multiplicity at least $k$ at all points in $\{0,1\}^n\sm \{(0,\dots,0)\}$, and a zero of multiplicity exactly $\ell$ at $(0,\dots,0)$ (recall that $\ell\leq k-2$). It remains to check that $\deg P=n+2k-3$. Since $P\in U_k$, we must have $\deg P\leq n+2k-3$. Furthermore, the first part of Theorem \ref{thm-2} proved above implies that $\deg P\geq n+2k-3$. Hence $\deg P= n+2k-3$, and the polynomial $P$ has all of the desired properties.
\end{proof}

\section{Proof of Proposition \ref{propo-key}}
\label{sect-proof-proposition}

In this section, we will prove Proposition \ref{propo-key} by constructing a polynomial $P\in V_k$ satisfying the desired conditions. Our polynomial $P$ will be a symmetric polynomial.

Recall that we can write any symmetric polynomial $R\in \RR[x_1,\dots,x_n]$ in terms of the power sum symmetric polynomials $x_1^m+\dots+x_n^m$ for $1\leq m\leq n$. More precisely, we can write $R$ as a linear combination of products of power sum symmetric polynomials, i.e.\ as a linear combination of terms of the form $(x_1^{m_1}+\dots+x_n^{m_1})\dotsm (x_1^{m_\ell}+\dots+x_n^{m_\ell})$ for positive integers $m_1,\dots,m_\ell\leq n$. Furthermore, for every symmetric polynomial $R\in \RR[x_1,\dots,x_n]$, such a representation is unique (up to reordering $m_1,\dots,m_\ell$ in the terms of this linear combination), and we can therefore refer to it as ``the representation of $R$ in terms of power sum symmetric polynomials''.

\begin{claim}\label{claim-odd-exponents}
Let $n\geq 1$. Suppose that $d_1,\dots,d_t$ are positive integers with $d_1+\dots+d_t\leq n$, such that exactly one of $d_1,\dots,d_t$ is odd. Then the symmetric polynomial
\[\sum_{\substack{i_1,\dots,i_t\in \{1,\dots,n\}\\ \textnormal{distinct}}}x_{i_1}^{d_1}\dotsm x_{i_t}^{d_t}\]
is a linear combination of terms of the form $(x_1^{m_1}+\dots+x_n^{m_1})\dotsm (x_1^{m_\ell}+\dots+x_n^{m_\ell})$ for $m_1,\dots,m_\ell\in \{1,\dots,n\}$ with $m_1+\dots+m_\ell=d_1+\dots+d_t$ and such that exactly one of $m_1,\dots,m_\ell$ is odd.
\end{claim}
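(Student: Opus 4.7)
The plan is to induct on $t$. The base case $t=1$ is immediate: the sole $d_1$ is odd by hypothesis, and $\sum_i x_i^{d_1}$ is already the single power sum $p_{d_1}$, which is trivially a linear combination of the desired form.

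For the inductive step, I would expand the product of power sums
\[
p_{d_1}\dotsm p_{d_t} = \prod_{j=1}^t (x_1^{d_j}+\dots+x_n^{d_j}) = \sum_{i_1,\dots,i_t\in\{1,\dots,n\}} x_{i_1}^{d_1}\dotsm x_{i_t}^{d_t},
\]
and group the tuples $(i_1,\dots,i_t)$ by the set partition $\pi$ of $\{1,\dots,t\}$ recording coincidences (so $i_a = i_b$ iff $a, b$ lie in the same block). Writing $e_B = \sum_{j\in B} d_j$ for each block $B$ of $\pi$, this yields the identity
\[
p_{d_1}\dotsm p_{d_t} = \sum_{\pi}\ \sum_{\substack{(i_B)_{B\in\pi}\\ \text{distinct}}}\ \prod_{B\in\pi} x_{i_B}^{e_B}.
\]
The partition into singletons contributes exactly the sum appearing in the claim, so rearranging writes that sum as $p_{d_1}\dotsm p_{d_t}$ minus the contributions from all partitions $\pi$ with fewer than $t$ blocks.

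The key observation is now a parity one: since exactly one of $d_1,\dots,d_t$ is odd, for any partition $\pi$ exactly one block $B$ has $e_B$ odd, namely the block containing the unique odd $d_j$. Hence $p_{d_1}\dotsm p_{d_t}$ itself is a product of power sums of the desired form (exactly one odd exponent, total $d_1+\dots+d_t\leq n$). Moreover, for each partition $\pi$ with $t' < t$ blocks $B_1,\dots,B_{t'}$, the corresponding inner sum $\sum_{\text{distinct}} x_{i_{B_1}}^{e_{B_1}}\dotsm x_{i_{B_{t'}}}^{e_{B_{t'}}}$ satisfies the hypotheses of the claim with $t'$ in place of $t$: the block exponents $e_{B_r}$ are positive integers summing to $d_1+\dots+d_t \leq n$, and exactly one of them is odd. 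The inductive hypothesis therefore rewrites each such inner sum as a linear combination of products of power sums of the required form, and assembling everything completes the induction. The only point that needs real care is this parity propagation—that ``exactly one odd $d_j$'' is preserved under any coalescing of blocks into sums $e_B$—which is the heart of the argument; the rest is standard bookkeeping about the partition expansion of a product of power sums.
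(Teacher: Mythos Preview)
Your proposal is correct and takes essentially the same approach as the paper: both induct on $t$ and rest on the observation that coalescing exponents preserves the ``exactly one odd'' condition. The only difference is organizational---the paper peels off a single even-exponent power sum $p_{d_t}$ and corrects for pairwise collisions $i_t=i_s$, whereas you expand $p_{d_1}\dotsm p_{d_t}$ all at once via set partitions; both reduce to the same inductive hypothesis.
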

\begin{proof}Let us prove the claim by induction on $t$. If $t=1$, then $d_1$ is odd and the symmetric polynomial $x_1^{d_1}+\dots+x_n^{d_1}$ is already of the desired form. So let us now assume that $t\geq 2$ and that the claim holds for $t-1$. As only one of $d_1,\dots,d_t$ is odd, we may assume without loss of generality that $d_t$ is even. Note that
\begin{multline*}
\sum_{\substack{i_1,\dots,i_t\in \{1,\dots,n\}\\ \textnormal{distinct}}}x_{i_1}^{d_1}\dotsm x_{i_t}^{d_t}=(x_1^{d_t}+\dots+x_n^{d_t})\cdot \sum_{\substack{i_1,\dots,i_{t-1}\in \{1,\dots,n\}\\ \textnormal{distinct}}}x_{i_1}^{d_1}\dotsm x_{i_{t-1}}^{d_{t-1}}\\
-\sum_{s=1}^{t-1}\,\sum_{\substack{i_1,\dots,i_{t-1}\in \{1,\dots,n\}\\ \textnormal{distinct}}}x_{i_1}^{d_1}\dotsm x_{i_{s-1}}^{d_{s-1}}x_{i_{s}}^{d_{s}+d_t} x_{i_{s+1}}^{d_{s+1}}\dotsm x_{i_{t-1}}^{d_{t-1}}.
\end{multline*}
As exactly one of $d_1,\dots,d_{t-1}$ is odd, we can apply the induction hypothesis to the sum $\sum x_{i_1}^{d_1}\dotsm x_{i_{t-1}}^{d_{t-1}}$. We can then conclude that $(x_1^{d_t}+\dots+x_n^{d_t})\cdot \sum x_{i_1}^{d_1}\dotsm x_{i_{t-1}}^{d_{t-1}}$ is a linear combination of terms of the form $(x_1^{d_t}+\dots+x_n^{d_t})\cdot (x_1^{m_1}+\dots+x_n^{m_1})\dotsm (x_1^{m_\ell}+\dots+x_n^{m_\ell})$ for positive integers $m_1,\dots,m_\ell$ with $d_t+m_1+\dots+m_\ell=d_1+\dots+d_t$ and such that exactly one of $d_t,m_1,\dots,m_\ell$ is odd. Furthermore, for each $s=1,\dots,t-1$ we can also apply the induction hypothesis to the sum $\sum x_{i_1}^{d_1}\dotsm x_{i_{s-1}}^{d_{s-1}}x_{i_{s}}^{d_{s}+d_t} x_{i_{s+1}}^{d_{s+1}}\dotsm x_{i_{t-1}}^{d_{t-1}}$ and we find that each of these sums is a linear combination of terms of the desired form as well.
\end{proof}

\begin{claim}\label{claim-symmetric}
Let $k\geq 2$ and let $n\geq 2k-3$. Let $P\in U_k$ be a symmetric polynomial and let $\overline{P}$ be the homogeneous degree $n+2k-3$ part of $P$. Suppose that the polynomial $\overline{P}$ is divisible by $x_1\dotsm x_n$. Then we have $\overline{P}\in W_k$. Furthermore, when we write $\overline{P}$ in terms of the basis of $W_k$ in (\ref{eq-generators-Wk}), then the coefficient of $x_1\dotsm x_n \cdot (x_1^{2k-3}+\dots+x_n^{2k-3})$ is the same as the coefficient of $x_1^{2k-3}+\dots+x_n^{2k-3}$ when writing the symmetric polynomial $\overline{P}/(x_1\dotsm x_n)$ in terms of power sum symmetric polynomials.
\end{claim}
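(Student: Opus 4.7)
My plan is to write $R := \overline{P}/(x_1\dotsm x_n)$ (a symmetric polynomial of degree $2k-3$) in terms of power sum symmetric polynomials $p_m := x_1^m+\dots+x_n^m$, and then check that multiplying back by $x_1\dotsm x_n$ produces an expansion in the basis (\ref{eq-generators-Wk}) of $W_k$ from which both conclusions follow.

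First I would analyze the monomials of $R$. For any monomial $x_1^{e_1}\dotsm x_n^{e_n}$ of $\overline{P}$ we have $e_i\geq 1$ (by divisibility by $x_1\dotsm x_n$) and $\sum_i e_i=n+2k-3$, so setting $f_i:=e_i-1\geq 0$ yields $\sum_i f_i=2k-3$. The condition $P\in U_k$ translates to $\sum_i\lfloor e_i/2\rfloor=\sum_i\lceil f_i/2\rceil\leq k-1$. Writing $f_i=2g_i+r_i$ with $r_i\in\{0,1\}$ and letting $I=\{i:f_i\text{ odd}\}$ gives $\sum\lceil f_i/2\rceil=\sum g_i+|I|$ while $\sum f_i=2\sum g_i+|I|=2k-3$, forcing $|I|$ odd and then $|I|=1$. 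So every monomial of $R$ has total degree $2k-3$ with exactly one odd exponent. Since $R$ is symmetric, it is a linear combination of symmetric polynomials of the form $\sum_{i_1,\dots,i_t\textnormal{ distinct}}x_{i_1}^{d_1}\dotsm x_{i_t}^{d_t}$ with $d_1+\dots+d_t=2k-3$ and exactly one odd entry (and $t\leq 2k-3\leq n$, using $n\geq 2k-3$). By Claim \ref{claim-odd-exponents}, each such summand — and hence $R$ — can be expressed as a linear combination of products $p_{m_1}\dotsm p_{m_\ell}$ with positive integers $m_i$ summing to $2k-3$ and exactly one $m_i$ odd.

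To finish, for each such product let $m_\ell$ be the odd entry and write $m_i=2e_i$ for $i<\ell$. Expanding the even power sums gives
\[x_1\dotsm x_n\cdot p_{m_1}\dotsm p_{m_\ell}=\sum_{j_1,\dots,j_{\ell-1}\in\{1,\dots,n\}}x_1\dotsm x_n\cdot p_{m_\ell}\cdot x_{j_1}^{2e_1}\dotsm x_{j_{\ell-1}}^{2e_{\ell-1}},\]
and collecting exponents presents each summand as a basis element $x_1\dotsm x_n\cdot p_{m_\ell}\cdot x_1^{2d_1}\dotsm x_n^{2d_n}$ of $W_k$ (with $m_\ell+2\sum d_i=2k-3$). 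This proves $\overline{P}=x_1\dotsm x_n\cdot R\in W_k$. For the coefficient equality, note that the distinguished basis element $x_1\dotsm x_n\cdot(x_1^{2k-3}+\dots+x_n^{2k-3})$ corresponds to $m=2k-3$ with all $d_i=0$; for the expansion above to produce it we would need $m_\ell=2k-3$ and $e_1=\dots=e_{\ell-1}=0$, which forces $\ell=1$. Hence only the ``pure'' term in $R$ involving $p_{2k-3}$ itself contributes, and its coefficient is exactly the coefficient of $p_{2k-3}$ in the power-sum representation of $R$.

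The main obstacle is the bookkeeping in this last step: one must verify that no longer product ($\ell\geq 2$) in the power-sum representation of $R$ can contribute to the $x_1\dotsm x_n\cdot p_{2k-3}$ basis element after multiplying by $x_1\dotsm x_n$; the observation that any positive even $m_i$ forces at least one $d_j>0$ (hence a different basis element) is what makes this work. The parity calculation used to pin down the monomial structure of $R$ is the only genuinely technical ingredient; everything else is a direct manipulation of symmetric polynomials via Claim \ref{claim-odd-exponents}.
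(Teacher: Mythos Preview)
Your proposal is correct and follows essentially the same route as the paper's proof: both analyze the monomials of $R=\overline{P}/(x_1\dotsm x_n)$ to show each has exactly one odd exponent, invoke Claim~\ref{claim-odd-exponents} to express $R$ in power sums with exactly one odd index, and then observe that multiplying back by $x_1\dotsm x_n$ lands in $W_k$ with only the $\ell=1$ term contributing to the $m=2k-3$ basis element. Your parity bookkeeping on the $f_i$ is just the paper's argument on the exponents of $\overline{P}$ shifted by one, and (as the paper also notes) the identification of the two coefficients uses the uniqueness of the power-sum representation for $m_i\leq n$.
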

\begin{proof}
Recall that $P\in U_k$ does not contain any monomials which are divisible by $x_{i_1}^2\dotsm x_{i_k}^2$ for some (not necessarily distinct) indices $i_1,\dots,i_k\in \{1,\dots,n\}$. Therefore any monomial of $P$ must be of the form $x_1^{2m_1+r_1}\dotsm x_n^{2m_n+r_n}$ with non-negative integers $(m_1,\dots,m_n)$ with $m_1+\dots+ m_n\leq k-1$ and $r_1,\dots,r_n\in \{0,1\}$. If $(2m_1+r_1)+\dots +(2m_n+r_n)=n+2k-3$, then this is only possible if $m_1+\dots+ m_n= k-1$ and $r_1+\dots+ r_n= n-1$. This means that any monomial of $P$ of degree $n+2k-3$ must have an odd exponent for exactly $n-1$ of the $n$ variables $x_1,\dots,x_n$.

Now let $R=\overline{P}/(x_1\dotsm x_n)$, and note that $R\in \RR[x_1,\dots,x_n]$ is homogeneous of degree $2k-3$. Furthermore, since $P$ is symmetric, the polynomials $\overline{P}$ and $R$ are also symmetric. We saw above that every monomial in $\overline{P}$ has odd exponents for exactly $n-1$ of the variables $x_1,\dots,x_n$ (and an even exponent for the remaining variable). Hence every monomial in $R$ has an odd exponent for exactly one variable. Thus, we can apply Claim \ref{claim-odd-exponents} to all different types of monomials appearing in $R$ (recalling that $n\geq 2k-3$). We obtain that $R$ is a linear combination of terms of the form $(x_1^{m_1}+\dots+x_n^{m_1})\dotsm (x_1^{m_\ell}+\dots+x_n^{m_\ell})$ for $m_1,\dots,m_\ell\in \{1,\dots,n\}$ with $m_1+\dots+m_\ell=2k-3$ and such that exactly one of $m_1,\dots,m_\ell$ is odd. By renaming $m_1,\dots,m_\ell$, we can always assume that $m_1$ is odd and $m_2,\dots,m_\ell$ are even.

Note that this representation of $R$ as a linear combination of such terms is the representation of $R$ in terms of power sum symmetric polynomials. Hence the coefficient $\lambda$ of $x_1^{2k-3}+\dots+x_n^{2k-3}$ when expressing $\overline{P}/(x_1\dotsm x_n)=R$ in terms of power sum symmetric polynomials is precisely the coefficient of $x_1^{2k-3}+\dots+x_n^{2k-3}$ when expressing $R$ as a linear combination as above.

By multiplying with $x_1\dotsm x_n$, we can now express $\overline{P}=x_1\dotsm x_n\cdot R$ as a linear combination of terms of the form $x_1\dotsm x_n\cdot (x_1^{m_1}+\dots+x_n^{m_1})\dotsm (x_1^{m_\ell}+\dots+x_n^{m_\ell})$ for $m_1,\dots,m_\ell\in \{1,\dots,n\}$ with $m_1+\dots+m_\ell=2k-3$ such that $m_1$ is odd and $m_2,\dots,m_\ell$ are even. Note that the only such term with $m_1=2k-3$ is the term $x_1\dotsm x_n \cdot (x_1^{2k-3}+\dots+x_n^{2k-3})$, and the coefficient of this term equals $\lambda$.

Note that each of these terms $x_1\dotsm x_n\cdot (x_1^{m_1}+\dots+x_n^{m_1})\dotsm (x_1^{m_\ell}+\dots+x_n^{m_\ell})$ (with $m_1+\dots+m_\ell=2k-3$ and such that $m_1$ is odd and $m_2,\dots,m_\ell$ are even) can be written as a linear combination of terms of the form (\ref{eq-generators-Wk}) with $m=m_1$ (indeed, when multiplying out $(x_1^{m_2}+\dots+x_n^{m_2})\dotsm (x_1^{m_\ell}+\dots+x_n^{m_\ell})$ all variables always appear with even exponents). All in all, this shows that $\overline{P}$ can be written as a linear combination of terms of the form (\ref{eq-generators-Wk}), so $\overline{P}\in W_k$. Furthermore, when writing $\overline{P}$ in terms of the basis of $W_k$ in (\ref{eq-generators-Wk}), the coefficient of $x_1\dotsm x_n \cdot (x_1^{2k-3}+\dots+x_n^{2k-3})$ equals $\lambda$, which we defined as the coefficient of  $x_1^{2k-3}+\dots+x_n^{2k-3}$ when expressing $\overline{P}/(x_1\dotsm x_n)$ in terms of power sum symmetric polynomials.
\end{proof}

Let us now prove Proposition \ref{propo-key}. The proof of the proposition depends on three lemmas, whose proofs we will postpone to the next two subsections.

\begin{proof}[Proof of Proposition \ref{propo-key}]
Fix $k\geq 2$ and $n\geq 2k-3$. In order to construct the desired polynomial $P\in V_k$, let us first define the symmetric polynomial 
\begin{equation}\label{eq-def-Q}
    Q(x_1,\dots,x_n)=(-1)^{(k-1)n}\cdot (x_1-1)^k\dotsm (x_n-1)^k.
\end{equation}
Note that $Q$ has zeroes of multiplicity at least $k$ at all points in $\{0,1\}^n\sm \{(0,\dots,0)\}$.

Let us now define $P\in U_k$ to be the unique polynomial in $U_k$ such that $Q-P$ has zeroes of multiplicity at least $k$ at all points in $\{0,1\}^n\sm \{(0,\dots,0)\}$ and a zero of multiplicity at least $k-1$ at $(0,\dots,0)$ (such a polynomial $P$ exists by Claim \ref{claim-reduced-exists} and it is unique by Corollary \ref{corollary-reduced-unique}). Then $P$ has zeroes of multiplicity at least $k$ at all points in $\{0,1\}^n\sm \{(0,\dots,0)\}$, and hence $P\in V_k$. Furthermore, since $Q$ is symmetric and $P$ is unique, $P$ must also be a symmetric polynomial (otherwise we could permute the variables in $P$ and obtain another polynomial with the desired properties, contradicting uniqueness).

Recall that $\varphi_k(P)$ is the homogeneous degree $n+2k-3$ part of $P$. We need to show that $\varphi_k(P)\in W_k$ and that when writing $\varphi_k(P)\in W_k$ in terms of the basis in (\ref{eq-generators-Wk}), then the coefficient of the basis element $x_1\dotsm x_n \cdot (x_1^{2k-3}+\dots+x_n^{2k-3})$ is non-zero.

We wish to apply Claim \ref{claim-symmetric} to $P$, but this requires the homogeneous degree $n+2k-3$ part $\varphi_k(P)$ of $P$ to be divisible by $x_1\dotsm x_n$.

\begin{lemma}\label{lemma-divisibility}
The polynomial $\varphi_k(P)$ is divisible by $x_1\dotsm x_n$.
\end{lemma}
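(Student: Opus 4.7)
The plan is to show $P(0, x_2, \ldots, x_n) = -P_{n-1}(x_2, \ldots, x_n)$, where $P_{n-1}$ is the unique $k$-reduced polynomial in $n - 1$ variables obtained from Claim~\ref{claim-reduced-exists} and Corollary~\ref{corollary-reduced-unique} applied to $Q_{n-1}(x_2, \ldots, x_n) := (-1)^{(k-1)(n-1)}(x_2 - 1)^k \cdots (x_n - 1)^k$. Assuming this identity, since $P_{n-1}$ has degree at most $(n-1) + 2k - 3 = n + 2k - 4$, so does $P(0, x_2, \ldots, x_n)$, and hence its homogeneous degree $n + 2k - 3$ part vanishes. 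But this part is precisely $\varphi_k(P)|_{x_1 = 0}$, showing that $x_1$ divides $\varphi_k(P)$. Since $P$ is symmetric (as the unique reduction of the symmetric polynomial $Q$), so is $\varphi_k(P)$, and repeating the argument with each $x_j$ in place of $x_1$ gives $x_j \mid \varphi_k(P)$ for every $j$, hence $x_1 x_2 \cdots x_n \mid \varphi_k(P)$.

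To establish the identity, first verify the direct computation $Q(0, x_2, \ldots, x_n) = (-1)^{(k-1)n + k} \prod_{i=2}^n (x_i - 1)^k = -Q_{n-1}(x_2, \ldots, x_n)$, using $(-1)^{(k-1)n + k - (k-1)(n-1)} = (-1)^{2k - 1} = -1$. Restricting the multiplicity properties of $Q - P$ to $x_1 = 0$ yields that $(-Q_{n-1}) - P(0, x_2, \ldots, x_n)$ has multiplicity at least $k$ at each point of $\{0, 1\}^{n-1} \setminus \{(0, \ldots, 0)\}$ and multiplicity at least $k - 1$ at $(0, \ldots, 0)$. Combined with the defining property of $P_{n-1}$, the difference $D := P(0, x_2, \ldots, x_n) + P_{n-1}(x_2, \ldots, x_n)$ inherits these multiplicity properties, and additionally contains no monomial divisible by a product of $k$ squares (inherited from $P$ and $P_{n-1}$). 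If we knew $\deg D \leq n + 2k - 4$, then $D$ would be $k$-reduced in $n - 1$ variables, so by Corollary~\ref{corollary-reduced-unique} applied in $n - 1$ variables (equivalently, using that the analog of $\psi_k$ from Claim~\ref{claim-isomorphism-evaluations} in $n - 1$ variables is an isomorphism) we would conclude $D = 0$, finishing the proof.

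The main obstacle is ruling out $\deg D = n + 2k - 3$, which occurs precisely when $P(0, x_2, \ldots, x_n)$ has a nontrivial homogeneous degree $n + 2k - 3$ part---the very quantity whose vanishing is the statement of the lemma, making a naive uniqueness argument circular. To break this circularity, I would work in the enlarged space of polynomials in $n - 1$ variables of degree at most $n + 2k - 3$ with no monomial divisible by $k$ squares. By a dimension count analogous to Claim~\ref{claim-dim-Uk} and Claim~\ref{claim-isomorphism-evaluations}, the subspace of such polynomials satisfying the stated multiplicity conditions is spanned by the products $\prod_{i=2}^n x_i^{m_i}(x_i - 1)^{m_i + 1}$ indexed by tuples $(m_2, \ldots, m_n)$ of non-negative integers with $\sum m_i = k - 1$, so $D$ is a linear combination of these. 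The coefficients of $D$ in this basis are precisely the coefficients of the monomials $x_2^{2m_2 + 1} \cdots x_n^{2m_n + 1}$ in $P(0, x_2, \ldots, x_n)$, which by the symmetry of $P$ depend only on the underlying partition of $k - 1$. Showing that all these coefficients vanish is the hardest step; I expect it to require either an induction on $n$ (with base case $n = 2k - 3$, possibly leveraging a restriction of $P$ to a smaller subgrid), or an explicit combinatorial analysis tracking how step (a) and step (b) reductions in the procedure of Claim~\ref{claim-reduced-exists} contribute to the $x_1 = 0$ part of $P$ during the reduction of $Q$.
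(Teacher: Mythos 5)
Your proposal takes a genuinely different route from the paper, but it contains a gap that you yourself identify and do not close, and this gap turns out to be essentially the original statement in disguise. You reduce the lemma to showing that $P(0,x_2,\ldots,x_n)$ contains no monomial of the form $x_2^{2m_2+1}\dotsm x_n^{2m_n+1}$ with $m_2+\dots+m_n=k-1$. But since $P$ is $k$-reduced, every degree-$(n+2k-3)$ monomial of $P$ has an odd exponent in exactly $n-1$ of the variables (this is exactly the observation at the start of the proof of Claim~\ref{claim-symmetric}); so a degree-$(n+2k-3)$ monomial with $x_1$-exponent zero is precisely one of the form $x_2^{2m_2+1}\dotsm x_n^{2m_n+1}$ with $\sum m_i=k-1$. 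Thus "all these coefficients vanish" is literally equivalent to "$x_1$ divides $\varphi_k(P)$", which by your symmetry argument is the whole lemma. The dimension-count setup (identifying $D$ as a linear combination of $\prod_i x_i^{m_i}(x_i-1)^{m_i+1}$) is correct but does not constrain $D$ to be zero, and neither of the two strategies you gesture at for finishing is carried out; the second one, "an explicit combinatorial analysis tracking how step (a) and (b) reductions contribute," is essentially what the paper does.

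The paper's proof avoids the circularity entirely: rather than comparing $P$ to a reduced polynomial in $n-1$ variables, it computes $P$ explicitly. Using Claim~\ref{claim-power-xi-1}, each factor $(-1)^{k-1}(x_i-1)^k$ of $Q$ is rewritten as a linear combination of $x_i^m(x_i-1)^m$ and $x_i^m(x_i-1)^{m+1}$; after expanding the product and discarding terms that vanish to order $k$ on the relevant set (using Corollary~\ref{corollary-reduced-unique} to identify the remainder with $P$), one arrives at the closed formula $\varphi_k(P)=\sum a_{d_1}\dotsm a_{d_n}\,x_1^{d_1+1}\dotsm x_n^{d_n+1}$ with all $d_i\geq 0$, from which divisibility by $x_1\dotsm x_n$ is immediate. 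If you want to salvage your approach, you would need to prove the coefficient-vanishing claim by some means other than the lemma itself, which does not appear to be simpler than the paper's direct computation.
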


We postpone the proof of Lemma \ref{lemma-divisibility} to Subsection \ref{subsect-proof-two-lemmas}. Assuming Lemma \ref{lemma-divisibility}, by Claim \ref{claim-symmetric} we obtain $\varphi_k(P)\in W_k$, which establishes the first of our two required properties for $P$. Furthermore, we also obtain from Claim \ref{claim-symmetric} that the desired coefficient of $\varphi_k(P)$ (when writing it in terms of the basis elements of $W_k$) is equal to the coefficient of $x_1^{2k-3}+\dots+x_n^{2k-3}$ when writing $\varphi_k(P)/(x_1\dotsm x_n)$ in terms of power sum symmetric polynomials. So it remains to prove that this coefficient of  $x_1^{2k-3}+\dots+x_n^{2k-3}$ is non-zero. The following lemma determines this coefficient.

\begin{lemma}\label{lemma-coefficient}
When writing $\varphi_k(P)/(x_1\dotsm x_n)$ in terms of power sum symmetric polynomials, the coefficient of $x_1^{2k-3}+\dots+x_n^{2k-3}$ is equal to
\[\sum_{(m_1,\dots,m_t)}(-1)^{t}\cdot \binom{k-1-m_1}{m_1-1}\binom{k-1-m_2}{m_2}\dots \binom{k-1-m_t}{m_t},\]
where the sum is over all sequences $(m_1,\dots,m_t)$ of positive integers with $m_1+\dots+m_t=k-1$.
\end{lemma}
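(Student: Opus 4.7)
The plan is to compute the desired coefficient by tracking the contribution of $P$ to a specific monomial via the Taylor-data characterization of $P\in V_k$ as the unique symmetric element matching $Q$'s truncation at the origin.

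First I would observe that since $Q=(-1)^n\prod_j(1-x_j)^k$ already vanishes to order at least $k$ at every point of $\{0,1\}^n\sm\{(0,\dots,0)\}$, the polynomial $P\in V_k$ is the unique element whose Taylor expansion at the origin to order $k-2$ coincides with that of $Q$, namely the symmetric polynomial
\[T \;=\; (-1)^n\sum_{|\mathbf{a}|\le k-2}(-1)^{|\mathbf{a}|}\prod_{j=1}^{n}\binom{k}{a_j}\,x_j^{a_j}.\]
The binomial coefficients $\binom{k}{a_j}$ appearing in $T$ will be the source of the binomial factors in the final formula, and the alternating signs $(-1)^{|\mathbf{a}|}$ will produce the overall $(-1)^t$ weight.

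Next, for each sequence $(m_1,\dots,m_t)$ of positive integers summing to $k-1$, I would construct a symmetric polynomial $B_{(m_1,\dots,m_t)}\in V_k$ by iterated application of Claim \ref{claim-multiplication-increase-k} (multiplying by symmetric combinations of factors of the form $x_j(x_j-1)\cdot x_j^{2m_i-2}$) whose top-degree part in the basis \eqref{eq-generators-Wk} is a universal scalar multiple of $x_1\cdots x_n\cdot(x_1^{2m_1-1}+\cdots+x_n^{2m_1-1})\cdot(x_1^{2m_2}+\cdots+x_n^{2m_2})\cdots(x_1^{2m_t}+\cdots+x_n^{2m_t})$, and whose Taylor expansion at the origin is computable explicitly from the construction. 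The distinguished first coordinate $m_1$ is what produces the unique odd power-sum factor in the top degree, while the remaining $m_i$ contribute even power-sum factors; this reflects the structural constraint from Claim \ref{claim-symmetric} that each monomial of $\varphi_k(P)/(x_1\cdots x_n)$ has exactly one variable with odd exponent.

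Then I would express $P$ as a linear combination of the $B_{\mathbf{m}}$ (plus a complement whose top-degree part does not contribute to $x_1\cdots x_n\cdot(x_1^{2k-3}+\cdots+x_n^{2k-3})$), determining the coefficients by matching Taylor expansions against $T$. Reading off the coefficient of $x_1\cdots x_n\cdot(x_1^{2k-3}+\cdots+x_n^{2k-3})$ in $\varphi_k(P)$, which by Claim \ref{claim-symmetric} equals the coefficient of $x_1^{2k-3}+\cdots+x_n^{2k-3}$ in the power-sum expansion of $\varphi_k(P)/(x_1\cdots x_n)$, then yields a sum over sequences $(m_1,\dots,m_t)$ with the sign $(-1)^t$ from $T$ and with binomials $\binom{k-1-m_1}{m_1-1}$ for the first (distinguished) coordinate and $\binom{k-1-m_i}{m_i}$ for $i\ge 2$ arising from the matching step.

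The main obstacle will be constructing the family $\{B_{\mathbf{m}}\}$ so that both its Taylor expansion at the origin and its top-degree image in $W_k$ are simultaneously tractable, and performing the Taylor-data matching carefully enough that the signs and the asymmetric binomials line up precisely as stated. A potential alternative is to unfold the reduction algorithm of Claim \ref{claim-reduced-exists} applied directly to $Q$ and to track only those contributions that reach the monomial $x_1\cdots x_n\cdot(x_1^{2k-3}+\cdots+x_n^{2k-3})$ after all reductions, recasting the answer as a sum over reduction-paths naturally indexed by compositions of $k-1$ and evaluated using the binomial coefficients $\binom{k}{a_j}$ appearing in $Q$.
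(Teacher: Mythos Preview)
Your proposal has a structural gap in the construction of the family $\{B_{\mathbf m}\}$. Claim \ref{claim-multiplication-increase-k} only allows multiplication by $x_j(x_j-1)$, sending $V_{k-1}$ to $V_k$; it does not license multiplication by $x_j(x_j-1)\cdot x_j^{2m_i-2}$, and in any case iterating Claim \ref{claim-multiplication-increase-k} starting from $V_2$ produces elements whose top-degree parts are of the form $x_{j_1}^2\cdots x_{j_{k-2}}^2\cdot\varphi_2(\,\cdot\,)$, i.e.\ elements of the subspace $W_k'\subsetneq W_k$ spanned by the basis vectors \eqref{eq-generators-Wk} with $d_1+\dots+d_n\ge 1$. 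In particular you cannot obtain any $B_{\mathbf m}$ whose top-degree part hits $x_1\cdots x_n\cdot(x_1^{2k-3}+\cdots+x_n^{2k-3})$ this way: producing such an element is precisely the content of Proposition \ref{propo-key}, whose proof \emph{uses} the lemma you are trying to prove. So the proposed construction is circular for the composition $(k-1)$ of length $t=1$, which is exactly the term that matters.

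Even setting that aside, the heart of the argument --- matching the order-$(k-2)$ Taylor data of $P$ against a family whose members you have not written down explicitly, and then explaining why the asymmetric binomials $\binom{k-1-m_1}{m_1-1}$ versus $\binom{k-1-m_i}{m_i}$ emerge from that matching --- is asserted rather than carried out. By contrast, the paper proceeds directly: it uses a one-variable identity expressing $(-1)^{k-1}(x_i-1)^k$ as a combination of the polynomials $x_i^m(x_i-1)^{m+1}$ and $x_i^m(x_i-1)^m$ with coefficients $\binom{k-1-m}{m}$ and $-\binom{k-1-m}{m-1}$, which already puts the product $Q$ into $k$-reduced form up to discardable terms; this gives $P$ and hence $\varphi_k(P)$ \emph{explicitly}, after which a standard power-sum identity together with a cyclic-rotation argument yields the formula. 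Your ``potential alternative'' of unfolding the reduction of Claim \ref{claim-reduced-exists} directly on $Q$ is in fact much closer to what actually works; the key missing idea is the one-variable identity that organizes that reduction and produces the binomial coefficients $\binom{k-1-m}{m}$ and $\binom{k-1-m}{m-1}$ at the outset.
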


We also postpone  the proof of Lemma \ref{lemma-coefficient} to Subsection \ref{subsect-proof-two-lemmas}. Surprisingly, the value of the sum in Lemma \ref{lemma-coefficient} is (up to its sign) equal to the Catalan number $C_{k-2}$; see the following lemma. Recall that the Catalan numbers $C_0, C_1, \dots$ are given by the explicit formula $C_n=\binom{2n}{n}/(n+1)$ for all $n\geq 0$.

\begin{lemma}\label{lemma-catalan}
For any $\ell\geq 1$, we have
\[\sum_{(m_1,\dots,m_t)}(-1)^{t}\cdot \binom{\ell-m_1}{m_1-1}\binom{\ell-m_2}{m_2}\dots \binom{\ell-m_t}{m_t}=(-1)^\ell C_{\ell-1},\]
where the sum is over all sequences $(m_1,\dots,m_t)$ of positive integers with $m_1+\dots+m_t=\ell$.
\end{lemma}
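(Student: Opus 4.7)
The plan is to express the sum as a coefficient of an auxiliary one-variable generating function and then simplify it using a Binet-type formula for Fibonacci-like polynomials. Fix $\ell$ and introduce a formal variable $y$. Define $A(y):=\sum_{m\geq 1}\binom{\ell-m}{m}y^m$ and $B(y):=\sum_{m\geq 1}\binom{\ell-m}{m-1}y^m$. Since $y^{m_1+\cdots+m_t}=y^\ell$ exactly when $m_1+\cdots+m_t=\ell$, the sum in the lemma equals $\sum_{t\geq 1}(-1)^t\,[y^\ell]\,B(y)A(y)^{t-1}$, and summing the formal geometric series gives
\[
\sum_{(m_1,\dots,m_t)}(-1)^{t}\binom{\ell-m_1}{m_1-1}\prod_{i=2}^{t}\binom{\ell-m_i}{m_i}\;=\;-[y^\ell]\,\frac{B(y)}{1+A(y)}.
\]

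Next, I would recognise the numerator and denominator as Fibonacci-type polynomials. Setting $F_N(y):=\sum_{m\geq 0}\binom{N-m}{m}y^m$ (so that $F_0=F_1=1$ and $F_N=F_{N-1}+yF_{N-2}$ by Pascal's rule), one checks directly that $1+A(y)=F_\ell(y)$ and, after shifting $m\mapsto m+1$ in the definition of $B$, that $B(y)=y\,F_{\ell-1}(y)$. Thus the quantity we want to compute equals $-[y^\ell]\bigl(yF_{\ell-1}(y)/F_\ell(y)\bigr)$.

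To evaluate this coefficient, I would use the Binet-type closed form $F_N(y)=(\lambda_+^{N+1}-\lambda_-^{N+1})/(\lambda_+-\lambda_-)$ coming from the characteristic equation $\lambda^2=\lambda+y$, whose roots are $\lambda_\pm=(1\pm\sqrt{1+4y})/2$. Because $\lambda_+\lambda_-=-y$, the ratio $\lambda_-/\lambda_+=-y/\lambda_+^2$ lies in $y\,\RR[[y]]$, so $(\lambda_-/\lambda_+)^{\ell}$ vanishes to order at least $y^\ell$. A short expansion then yields
\[
\frac{F_{\ell-1}(y)}{F_\ell(y)}\;=\;\frac{1}{\lambda_+}\cdot\frac{1-(\lambda_-/\lambda_+)^{\ell}}{1-(\lambda_-/\lambda_+)^{\ell+1}}\;=\;\frac{1}{\lambda_+}+O(y^\ell),
\]
so $[y^\ell]\,yF_{\ell-1}(y)/F_\ell(y)=[y^\ell]\,y/\lambda_+$. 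Using $\lambda_+\lambda_-=-y$ once more gives $y/\lambda_+=-\lambda_-=(\sqrt{1+4y}-1)/2$, and then the Catalan identity $1-\sqrt{1-4x}=2xC(x)$ (with $x=-y$) turns this into
\[
\frac{\sqrt{1+4y}-1}{2}\;=\;y\,C(-y)\;=\;\sum_{\ell\geq 1}(-1)^{\ell-1}C_{\ell-1}\,y^\ell.
\]
Extracting $[y^\ell]$ and absorbing the outer minus sign yields the claimed value $(-1)^\ell C_{\ell-1}$.

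The main obstacle to watch for is the order-of-vanishing step $F_{\ell-1}(y)/F_\ell(y)=1/\lambda_+ + O(y^\ell)$; everything else is routine manipulation of formal power series combined with the standard Catalan identity. To justify that step cleanly, I would verify that $\lambda_+=1+y+O(y^2)$ is a unit in $\RR[[y]]$ and that $\lambda_-=-y+O(y^2)$, so both factors $1-(\lambda_-/\lambda_+)^{\ell}$ and $1-(\lambda_-/\lambda_+)^{\ell+1}$ differ from $1$ by power series of order at least $y^\ell$; this makes the quotient agree with $1/\lambda_+$ through degree $y^{\ell-1}$, which is all that is needed to match the $y^\ell$-coefficient.
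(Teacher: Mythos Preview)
Your argument is correct and takes a genuinely different route from the paper's proof. The paper proceeds by first establishing, by induction, the Catalan identity $\sum_{i=0}^{s}(-1)^i C_i\binom{i+1}{s-i}=0$ for $s\ge 1$, then proving (again by induction) an auxiliary expansion of $\binom{n}{m}$ as a signed Catalan-weighted sum of shifted binomials (their Claim~\ref{claim-expression-via-b}), and finally applying that expansion to the first factor $\binom{\ell-m_1}{m_1-1}$ and performing a change of variables together with a telescoping cancellation to isolate $(-1)^\ell C_{\ell-1}$. Your proof instead packages the whole sum as the coefficient $-[y^\ell]\,B(y)/(1+A(y))$, recognises $1+A=F_\ell$ and $B=yF_{\ell-1}$ as Fibonacci-type polynomials satisfying $F_N=F_{N-1}+yF_{N-2}$, and then uses the Binet formula to reduce $yF_{\ell-1}/F_\ell$ modulo $y^{\ell+1}$ to $y/\lambda_+=-\lambda_-$, whose expansion is the (signed) Catalan generating function. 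All the steps check: $A$ has no constant term so the geometric series is legitimate; $\lambda_+$ is a unit and $\lambda_-/\lambda_+=-y/\lambda_+^2\in y\,\RR[[y]]$, so the truncation $F_{\ell-1}/F_\ell=1/\lambda_++O(y^\ell)$ is justified exactly as you outline. Your approach is shorter and gives a structural explanation for the appearance of Catalan numbers (via $-\lambda_-=yC(-y)$), whereas the paper's proof is more elementary in that it uses only binomial manipulations and induction but is longer and less transparent about where the Catalan numbers come from. It is worth noting that the paper cites alternative proofs by Ekhad--Zeilberger and by Carde; the latter in particular (``$q$-golden ratio'') is in a similar spirit to your Fibonacci/Binet approach.
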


We postpone  the proof of Lemma \ref{lemma-catalan} to Subsection \ref{subsect-proof-last-lemma}. Applying the lemma with $\ell=k-1$, we see that the sum in Lemma \ref{lemma-coefficient} equals $(-1)^{k-1} C_{k-2}=(-1)^{k-1}\binom{2k-4}{k-2}/(k-1)\neq 0$. Thus,  the coefficient of $x_1^{2k-3}+\dots+x_n^{2k-3}$ when writing $\varphi_k(P)/(x_1\dotsm x_n)$ in terms of power sum symmetric polynomials is indeed non-zero, finishing the proof of the Proposition \ref{propo-key}.
\end{proof}

\subsection{Proof of Lemmas \ref{lemma-divisibility} and \ref{lemma-coefficient}}
\label{subsect-proof-two-lemmas}

In this subsection, we prove Lemmas \ref{lemma-divisibility} and \ref{lemma-coefficient} by analyzing the polynomial $P$ and its homogeneous degree $n+2k-3$ part $\varphi_k(P)$. As before, let $k\geq 2$ and $n\geq 2k-3$ be fixed.

In order to calculate $\varphi_k(P)$, let us first rewrite the definition of the polynomial $Q$ in (\ref{eq-def-Q}) as
\[Q(x_1,\dots,x_n)= \prod_{i=1}^{n}\left((-1)^{k-1}\cdot (x_i-1)^{k}\right).\]
For $i=1,\dots,n$, let us now write $(-1)^{k-1}\cdot (x_i-1)^{k}$ as a linear combination of $\allowbreak (x_i-1),\allowbreak x_i(x_i-1),\allowbreak x_i(x_i-1)^2,\allowbreak x_i^2(x_i-1)^2,\allowbreak x_i^2(x_i-1)^3,\dots$, as in the following claim.

\begin{claim}\label{claim-power-xi-1}
For any $\ell\geq 1$ and $i\in \{1,\dots,n\}$, we have
\[(-1)^{\ell-1}\cdot (x_i-1)^{\ell}=\sum_{m=0}^{\lfloor (\ell-1)/2\rfloor}\binom{\ell-1-m}{m} x_i^m(x_i-1)^{m+1}-\sum_{m=1}^{\lfloor \ell/2\rfloor}\binom{\ell-1-m}{m-1} x_i^m(x_i-1)^{m}.\]
\end{claim}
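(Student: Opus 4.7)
The plan is to prove Claim \ref{claim-power-xi-1} by induction on $\ell$. For the base case $\ell=1$, the left-hand side equals $x_i - 1$ and the right-hand side reduces to its single $m=0$ term $\binom{0}{0}(x_i-1) = x_i-1$ (the second sum being empty), so both sides agree. For the inductive step, the recursion
$$(-1)^{\ell}(x_i-1)^{\ell+1} = (1-x_i)\cdot(-1)^{\ell-1}(x_i-1)^{\ell}$$
reduces matters to showing that multiplying the right-hand side of the claim at level $\ell$ by $(1-x_i)$ produces the right-hand side of the claim at level $\ell+1$.

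Distributing $(1-x_i)$ against the inductive hypothesis yields four sums indexed over $m\geq 0$ (using the paper's convention that $\binom{n}{m}=0$ outside the valid range, so that boundary issues take care of themselves):
$$\sum_m\binom{\ell-1-m}{m}x_i^m(x_i-1)^{m+1} - \sum_m\binom{\ell-1-m}{m}x_i^{m+1}(x_i-1)^{m+1} - \sum_m\binom{\ell-1-m}{m-1}x_i^m(x_i-1)^m + \sum_m\binom{\ell-1-m}{m-1}x_i^{m+1}(x_i-1)^m.$$
The key manipulation is to rewrite the last sum via $x_i = (x_i-1)+1$, so that $x_i^{m+1}(x_i-1)^m = x_i^m(x_i-1)^{m+1} + x_i^m(x_i-1)^m$. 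The $x_i^m(x_i-1)^m$ piece then cancels the third sum outright, thanks to the matching binomial coefficients.

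What remains are two sums over terms of the form $x_i^m(x_i-1)^{m+1}$ together with one sum over terms of the form $x_i^{m+1}(x_i-1)^{m+1}$. Combining the first two via Pascal's identity $\binom{\ell-1-m}{m}+\binom{\ell-1-m}{m-1}=\binom{\ell-m}{m}$, and re-indexing the third by $m \mapsto m-1$ to obtain $-\sum_m\binom{\ell-m}{m-1}x_i^m(x_i-1)^m$, we arrive at exactly the right-hand side of the claim for $\ell+1$. The only potential obstacle is the bookkeeping of index shifts and the interaction with the floor-function bounds on the summations, but this is cleanly bypassed by extending every sum to $m\geq 0$ under the binomial vanishing convention, so no separate case analysis on the parity of $\ell$ is required.
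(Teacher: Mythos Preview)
Your proof is correct and follows essentially the same approach as the paper: induction on $\ell$, multiplying the inductive hypothesis by $-(x_i-1)=1-x_i$, then using Pascal's identity and an index shift to recover the formula at level $\ell+1$. The only cosmetic difference is that the paper absorbs the extra factor $(x_i-1)$ directly into the second sum (yielding three terms immediately), whereas you first distribute $1-x_i$ over both sums to get four terms and then use $x_i=(x_i-1)+1$ to collapse back to three; the end result and the key Pascal step are identical.
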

\begin{proof}
First, note that we may equivalently write the desired equation as
\[(-1)^{\ell-1}\cdot (x_i-1)^{\ell}=\sum_{m=0}^{\infty}\binom{\ell-1-m}{m} x_i^m(x_i-1)^{m+1}-\sum_{m=1}^{\infty}\binom{\ell-1-m}{m-1} x_i^m(x_i-1)^{m},\]
since for all terms appearing in the infinite sums that did not appear in the original equation, the respective binomial coefficients are zero.

Let us now prove this equation by induction on $\ell$. For $\ell=1$, the equation is easy to check. Let us now assume it is true for some $\ell\geq 1$, and let us check it for $\ell+1$. We have
\begin{align*}
(-1)^{\ell}(x_i-1)^{\ell+1}&= -(x_i-1)\sum_{m=0}^{\infty}\binom{\ell-1-m}{m} x_i^m(x_i-1)^{m+1}+(x_i-1)\sum_{m=1}^{\infty}\binom{\ell-1-m}{m-1} x_i^m(x_i-1)^{m}\\
&=-\sum_{m=0}^{\infty}\binom{\ell-1-m}{m} x_i^{m+1}(x_i-1)^{m+1}+\sum_{m=0}^{\infty}\binom{\ell-1-m}{m} x_i^m(x_i-1)^{m+1}\\
&\pushright{+\sum_{m=1}^{\infty}\binom{\ell-1-m}{m-1} x_i^m(x_i-1)^{m+1}}\\
&=-\sum_{m=1}^{\infty}\binom{\ell-m}{m-1} x_i^{m}(x_i-1)^{m}+\sum_{m=0}^{\infty}\binom{\ell-m}{m} x_i^m(x_i-1)^{m+1},
\end{align*}
as desired. This proves the claim.
\end{proof}

From Claim \ref{claim-power-xi-1}, we now obtain that $Q(x_1,\dots,x_n)$ equals
\begin{equation}\label{eq-expression-Q}
    \prod_{i=1}^{n}\left(\sum_{m=0}^{\lfloor (k-1)/2\rfloor}\binom{k-1-m}{m} x_i^m(x_i-1)^{m+1}-\sum_{m=1}^{\lfloor k/2\rfloor}\binom{k-1-m}{m-1} x_i^m(x_i-1)^{m}\right).
\end{equation}

Using this equation, we can calculate the polynomial $P$. Recall that $P$ is the unique polynomial in $U_k$ such that $Q-P$ has zeroes of multiplicity at least $k$ at all points in $\{0,1\}^n\sm \{(0,\dots,0)\}$ and a zero of multiplicity at least $k-1$ at $(0,\dots,0)$.

When expanding the product over all $i$ in (\ref{eq-expression-Q}), each of the terms that we obtain is a product of factors of the form $x_i^{m_i}(x_i-1)^{m_i+1}$ or $x_i^{m_i}(x_i-1)^{m_i}$ for each $i=1,\dots,n$ (with some coefficient given as the product of certain binomial coefficients). Note that each such term is divisible by the product $x_1^{m_1}(x_1-1)^{m_1}\dotsm x_n^{m_n}(x_n-1)^{m_n}$, and if $m_1+\dots+m_n\geq k$ this means that the term has zeroes of multiplicity at least $k$ at all points in $\{0,1\}^n$. We can omit all such terms from $Q$ without violating the defining property of $P\in U_k$ (i.e.\ the property that $Q-P$ has zeroes of multiplicity at least $k$ at all points in $\{0,1\}^n\sm \{(0,\dots,0)\}$ and a zero of multiplicity at least $k-1$ at $(0,\dots,0)$).

So let us now imagine that we expand the product over $i$ in (\ref{eq-expression-Q}), but we only keep the terms which are products of factors of the form $x_i^{m_i}(x_i-1)^{m_i+1}$ or $x_i^{m_i}(x_i-1)^{m_i}$ for each $i=1,\dots,n$ such that $m_1+\dots+m_n\leq k-1$. Note that all of these terms have degree at most $n+2k-2$. Furthermore, any such term of degree $n+2k-2$ must be of the form $x_1^{m_1}(x_1-1)^{m_1+1}\dotsm x_n^{m_n}(x_n-1)^{m_n+1}=(x_1-1)\dotsm (x_n-1)\cdot x_1^{m_1}(x_1-1)^{m_1}\dotsm x_n^{m_n}(x_n-1)^{m_n}$ with  $m_1+\dots+m_n=k-1$. Hence any such term of degree $n+2k-2$ has zeroes of multiplicity at least $k$ at all points in $\{0,1\}^n\sm \{(0,\dots,0)\}$ and a zero of multiplicity at least $k-1$ at $(0,\dots,0)$. We can also omit all of these terms from $Q$ without violating the defining property of $P\in U_k$.

Now, let $Q^*$ be the polynomial consisting only of the remaining terms in the expansion of the product over $i$ in (\ref{eq-expression-Q}), i.e.\ of the terms which are products of factors of the form $x_i^{m_i}(x_i-1)^{m_i+1}$ or $x_i^{m_i}(x_i-1)^{m_i}$ for each $i=1,\dots,n$ such that $m_1+\dots+m_n\leq k-1$ and which have degree at most $n+2k-3$. Then $Q^*-P$ has zeroes of multiplicity at least $k$ at all points in $\{0,1\}^n\sm \{(0,\dots,0)\}$ and a zero of multiplicity at least $k-1$ at $(0,\dots,0)$. However, the polynomial $Q^*$ is also $k$-reduced, so we have $Q^*\in U_k$. By applying Corollary \ref{corollary-reduced-unique} to the polynomial $Q^*$, we can therefore conclude that $Q^*=P$ (since $P,Q^*\in U_k$ both satisfy the conditions in  Corollary \ref{corollary-reduced-unique}).

In other words, we just showed that $P$ can be obtained by expanding the product over $i$ in (\ref{eq-expression-Q}), but only keeping the terms which are products of factors of the form $x_i^{m_i}(x_i-1)^{m_i+1}$ or $x_i^{m_i}(x_i-1)^{m_i}$ for each $i=1,\dots,n$ such that $m_1+\dots+m_n\leq k-1$ and which have degree at most $n+2k-3$. Now, the homogeneous degree $n+2k-3$ part $\varphi_k(P)$ of $P$ can be calculated by only considering the products of degree exactly $n+2k-3$ (and by only taking the homogeneous degree $n+2k-3$ part of these products). In other words, $\varphi_k(P)$ is obtained by expanding
\begin{equation}\label{eq-expression-phi-P}
    \prod_{i=1}^{n}\left(\sum_{m=0}^{\lfloor (k-1)/2\rfloor}\binom{k-1-m}{m} x_i^{2m+1}-\sum_{m=1}^{\lfloor k/2\rfloor}\binom{k-1-m}{m-1} x_i^{2m}\right),
\end{equation}
but only keeping the terms which are products of factors of the form $x_i^{2m_i+1}$ or $x_i^{2m_i}$ for each $i=1,\dots,n$ such that $m_1+\dots+m_n\leq k-1$ and which have degree exactly $n+2k-3$. Note that such a term can only have degree $n+2k-3$ if it consists of $n-1$ factors of the form $x_i^{2m_i+1}$ and one factor of the form $x_i^{2m_i}$ (with $m_1+\dots+m_n= k-1$). Hence $\varphi_k(P)$ can be obtained by expanding (\ref{eq-expression-phi-P}), but only keeping the monomials of degree exactly $n+2k-3$ in which exactly one variable has an even exponent.

To simplify notation, let us define $a_0,\dots,a_{k-1}$ to be the coefficients such that (\ref{eq-expression-phi-P}) can be written as $\prod_{i=1}^n(a_{k-1}x_i^{k}+a_{k-2}x_i^{k-1}+\dots+a_0x_i)$. Furthermore, for $d\geq k$, let us define $a_d=0$. Note that then for all $m\geq 0$ we have $a_{2m}=\binom{k-1-m}{m}$, and for all $m\geq 1$ we have $a_{2m-1}=-\binom{k-1-m}{m-1}$. Also note that $a_0=1$. Using this notation, we now obtain that
\begin{equation}\label{eq-phi-P-formal}
\varphi_k(P)=\sum_{(d_1,\dots,d_n)} a_{d_1}\dotsm a_{d_n}\cdot x_{1}^{d_1+1}\dotsm x_{n}^{d_n+1},
\end{equation}
where the sum is over all sequences $(d_1,\dots,d_n)$ of non-negative integers with $(d_1+1)+\dots+(d_n+1)=n+2k-3$ such that exactly one of $d_1+1,\dots,d_n+1$ is even. These conditions are equivalent to demanding that $d_1+\dots+d_n=2k-3$ and exactly one of $d_1,\dots,d_n$ is odd. Now, Lemma \ref{lemma-divisibility} follows easily.

\begin{proof}[Proof of Lemma \ref{lemma-divisibility}]
All monomials appearing in (\ref{eq-phi-P-formal}) are divisible by $x_1\dotsm x_n$, and hence $\varphi_k(P)$ is divisible by $x_1\dotsm x_n$, as desired.
\end{proof}

From (\ref{eq-phi-P-formal}), we now obtain that
\[\varphi_k(P)/(x_1\dotsm x_n)=\sum_{(d_1,\dots,d_n)} a_{d_1}\dotsm a_{d_n}\cdot x_{1}^{d_1}\dotsm x_{n}^{d_n},\]
where the sum is over all sequences $(d_1,\dots,d_n)$ of non-negative integers with $d_1+\dots+d_n=2k-3$ such that exactly one of $d_1,\dots,d_n$ is odd. Recalling that $a_0=1$, we can rewrite this equation as
\begin{equation}\label{eq-hom-deg-2k-3}
   \varphi_k(P)/(x_1\dotsm x_n)=\sum_{(d_1,\dots,d_t)}\left(a_{d_1}\dotsm a_{d_t}\sum_{1\leq i_1<\dots<i_t\leq n}x_{i_1}^{d_1}\dotsm x_{i_t}^{d_t}\right),
\end{equation}
where the sum is over all sequences $(d_1,\dots,d_t)$ of positive integers with $d_1+\dots+d_t=2k-3$ such that exactly one of $d_1,\dots,d_t$ is odd. Note that when we consider all $t!$ permutations of one fixed such sequence $(d_1,\dots,d_t)$ (some of which may be equal to the original sequence), then the resulting part of the sum in (\ref{eq-hom-deg-2k-3}) is the symmetric polynomial $a_{d_1}\dotsm a_{d_t}\sum_{\substack{i_1,\dots, i_t\\\textnormal{distinct}}}x_{i_1}^{d_1}\dotsm x_{i_t}^{d_t}$. By averaging over all permutations of $(d_1,\dots,d_t)$, we can therefore rewrite (\ref{eq-hom-deg-2k-3}) as
\begin{equation}\label{eq-hom-deg-2k-3-symm}
\varphi_k(P)/(x_1\dotsm x_n)=\sum_{(d_1,\dots,d_t)}\left(\frac{a_{d_1}\dotsm a_{d_t}}{t!}\sum_{\substack{i_1,\dots,i_t\in \{1,\dots,n\}\\ \textnormal{distinct}}}x_{i_1}^{d_1}\dotsm x_{i_t}^{d_t}\right),
\end{equation}
where the sum is again over all sequences $(d_1,\dots,d_t)$ of positive integers with $d_1+\dots+d_t=2k-3$ such that exactly one of $d_1,\dots,d_t$ is odd.

In order to prove Lemma \ref{lemma-coefficient}, we need to find the coefficient of $x_1^{2k-3}+\dots+x_n^{2k-3}$ when expressing $\varphi_k(P)/(x_1\dotsm x_n)$ in terms of power sum symmetric polynomials. We will use the following claim, which can be derived from more general statements in the theory of symmetric polynomials. For the reader's convenience we provide a simple self-contained proof.

\begin{claim}\label{claim-coeff-newton}
For any sequence $(d_1,\dots,d_t)$ of positive integers with $d_1+\dots+d_t\leq n$, when expressing
\[\sum_{\substack{i_1,\dots,i_t\in \{1,\dots,n\}\\ \textnormal{distinct}}}x_{i_1}^{d_1}\dotsm x_{i_t}^{d_t}\]
in terms of power sum symmetric polynomials, the coefficient of $x_1^{d_1+\dots+d_t}+\dots+x_n^{d_1+\dots+d_t}$ equals $t!\cdot (-1)^{t-1}/t$.
\end{claim}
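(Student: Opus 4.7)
The plan is to prove the claim by induction on $t$, exploiting a recursion that separates a $p_{d_t}$-multiplication according to whether a newly summed index collides with an existing one. Throughout, let $p_m = x_1^m + \dots + x_n^m$ denote the power sum symmetric polynomials, let $S_t(d_1,\dots,d_t)$ denote the left-hand side of the claim, and set $D = d_1+\dots+d_t$. The base case $t=1$ is immediate, since $S_1(d_1) = p_{d_1}$, which in its own (trivial) power sum expansion has coefficient $1 = 1!\cdot(-1)^0/1$.

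For the inductive step, I would split the product $p_{d_t}\cdot S_{t-1}(d_1,\dots,d_{t-1})$ according to whether the $p_{d_t}$-index coincides with one of the $t-1$ pairwise-distinct preexisting indices. This immediately yields the recursion
\[S_t(d_1,\dots,d_t) = p_{d_t}\cdot S_{t-1}(d_1,\dots,d_{t-1}) - \sum_{s=1}^{t-1}S_{t-1}(d_1,\dots,d_{s-1},d_s+d_t,d_{s+1},\dots,d_{t-1}),\]
which is essentially the same identity already used in the proof of Claim~\ref{claim-odd-exponents}.

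The crucial observation is that the first term on the right contributes nothing to the coefficient of the single power sum $p_D$ in the power sum expansion. Indeed, when $S_{t-1}(d_1,\dots,d_{t-1})$ (which is homogeneous of positive degree $D-d_t$) is written as a linear combination of products $p_{m_1}\dotsm p_{m_\l}$, every such product has $\l\geq 1$ factors. Multiplying by $p_{d_t}$ therefore produces only products of at least two power sums, none of which can equal the single power sum $p_D$. For the remaining $t-1$ summands, each $(t-1)$-tuple $(d_1,\dots,d_{s-1},d_s+d_t,d_{s+1},\dots,d_{t-1})$ consists of positive integers summing to $D\leq n$, so the induction hypothesis applies and contributes coefficient $(t-1)!\cdot(-1)^{t-2}/(t-1) = (t-2)!\cdot(-1)^{t-2}$ to $p_D$. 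Summing these $t-1$ contributions and negating gives
\[-(t-1)\cdot(t-2)!\cdot(-1)^{t-2} = (t-1)!\cdot(-1)^{t-1} = t!\cdot(-1)^{t-1}/t,\]
as required.

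I do not expect any substantive obstacle. The only mildly subtle point is the vanishing in the previous paragraph, which is itself immediate from the uniqueness of the power sum expansion (already noted in the paper) combined with the strict positivity of $D - d_t$, ensuring that the expansion of $S_{t-1}(d_1,\dots,d_{t-1})$ involves no constant term.
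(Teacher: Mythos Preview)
Your proof is correct and follows essentially the same route as the paper's own proof: induction on $t$ via the identical recursion, the observation that the $p_{d_t}\cdot S_{t-1}$ term contributes nothing to the coefficient of $p_D$, and the same arithmetic on the remaining $t-1$ summands. The only difference is cosmetic---you are slightly more explicit about invoking uniqueness of the power sum expansion to justify the vanishing step.
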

\begin{proof}For $t=1$, the claim is trivially true. Let us now assume that $t\geq 2$, and that we already proved the claim for $t-1$. Note that
\begin{multline*}
\sum_{\substack{i_1,\dots,i_t\in \{1,\dots,n\}\\ \textnormal{distinct}}}x_{i_1}^{d_1}\dotsm x_{i_t}^{d_t}=(x_1^{d_t}+\dots+x_n^{d_t})\cdot \sum_{\substack{i_1,\dots,i_{t-1}\in \{1,\dots,n\}\\ \textnormal{distinct}}}x_{i_1}^{d_1}\dotsm x_{i_{t-1}}^{d_{t-1}}\\
-\sum_{s=1}^{t-1}\,\sum_{\substack{i_1,\dots,i_{t-1}\in \{1,\dots,n\}\\ \textnormal{distinct}}}x_{i_1}^{d_1}\dotsm x_{i_{s-1}}^{d_{s-1}}x_{i_{s}}^{d_{s}+d_t} x_{i_{s+1}}^{d_{s+1}}\dotsm x_{i_{t-1}}^{d_{t-1}}.
\end{multline*}
Let us now imagine that we express the sums on the left-hand side in terms of power sum symmetric polynomials. The terms contributed from the first part (before the minus sign), all contain a factor $x_1^{d_t}+\dots+x_n^{d_t}$, and in particular this first part does not contribute any $x_1^{d_1+\dots+d_t}+\dots+x_n^{d_1+\dots+d_t}$ terms. For the second part (after the minus sign), the coefficient of $x_1^{d_1+\dots+d_t}+\dots+x_n^{d_1+\dots+d_t}$ is by the induction hypothesis equal to $-\sum_{s=1}^{t-1}(t-1)!\cdot (-1)^{t-2}/(t-1)=(t-1)!\cdot (-1)^{t-1}=t!\cdot (-1)^{t-1}/t$. This finishes the proof of the claim.
\end{proof}

\begin{proof}[Proof of Lemma \ref{lemma-coefficient}]
Let $Y$ denote the coefficient of $x_1^{2k-3}+\dots+x_n^{2k-3}$ when writing $\varphi_k(P)/(x_1\dotsm x_n)$ in terms of power sum symmetric polynomials. We need to prove that $Y$ equals the sum in the statement of Lemma \ref{lemma-coefficient}.

Recalling that we assumed $n\geq 2k-3$, we can apply Claim \ref{claim-coeff-newton} to the terms on the right-hand side of (\ref{eq-hom-deg-2k-3-symm}), and obtain
\[Y=\sum_{(d_1,\dots,d_t)}\frac{a_{d_1}\dotsm a_{d_t}}{t!}\cdot \frac{t!\cdot (-1)^{t-1}}{t}=\sum_{(d_1,\dots,d_t)}a_{d_1}\dotsm a_{d_t}\cdot \frac{(-1)^{t-1}}{t},\]
where the sums are over all sequences $(d_1,\dots,d_t)$ of positive integers with $d_1+\dots+d_t=2k-3$ such that exactly one of $d_1,\dots,d_t$ is odd. Note that all $t$ cyclic permutations of one fixed such sequence $(d_1,\dots,d_t)$ contribute the same amount to the sum above, and for exactly one of these permutations $d_1$ is odd. Hence we can conclude that
\[Y=\sum_{(d_1,\dots,d_t)}t\cdot a_{d_1}\dotsm a_{d_t}\cdot \frac{(-1)^{t-1}}{t}=\sum_{(d_1,\dots,d_t)}(-1)^{t-1}a_{d_1}\dotsm a_{d_t},\]
where this time the sums are over all sequences $(d_1,\dots,d_t)$ of positive integers with $d_1+\dots+d_t=2k-3$ such that $d_1$ is odd and $d_2,\dots,d_t$ are even.

Let us now change variables, writing $d_1=2m_1-1$ and $d_j=2m_j$ for $j=2,\dots,t$. Then we obtain
\begin{multline*}
Y=\sum_{(m_1,\dots,m_t)}(-1)^{t-1}a_{2m_1-1}a_{2m_2}\dotsm  a_{2m_t}=\sum_{(m_1,\dots,m_t)}(-1)^{t}\cdot (-a_{2m_1-1})\cdot a_{2m_2}\dotsm a_{2m_t}\\
=\sum_{(m_1,\dots,m_t)}(-1)^{t}\cdot \binom{k-1-m_1}{m_1-1}\binom{k-1-m_2}{m_2}\dots \binom{k-1-m_t}{m_t},
\end{multline*}
where the sums are over all sequences $(m_1,\dots,m_t)$ of positive integers with $m_1+\dots+m_t=k-1$. This proves Lemma \ref{lemma-coefficient}.
\end{proof}

\subsection{Proof of Lemma \ref{lemma-catalan}}
\label{subsect-proof-last-lemma}

In this subsection, we prove Lemma \ref{lemma-catalan}. We remark that after an earlier version of this paper was posted, alternative proofs were found, by Ekhad and Zeilberger \cite{ekhad-zeilberger} and by Carde \cite{carde}.

We will use the formula for Catalan numbers stated in the following claim. This formula follows from work of Riordan \cite{riordan} and is also stated as Theorem 12.1 in a book on Catalan numbers by Koshy \cite{koshy}. It can also be proved from a simple bijection argument, as shown in \cite{ekhad-zeilberger}. For the reader's convenience we give a self-contained proof here.
\begin{claim}For all $s\geq 1$, we have
\begin{equation}\label{eq-formula-catalan}
    \sum_{i=0}^{s}(-1)^i C_i\cdot \binom{i+1}{s-i}=0.
\end{equation}
\end{claim}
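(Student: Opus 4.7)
The plan is to establish the identity by a short generating-function calculation, using the fact that the Catalan generating function $C(y)=\sum_{i\geq 0} C_i y^i$ admits the closed form $C(y)=(1-\sqrt{1-4y})/(2y)$ (as a formal power series, with the branch of the square root having constant term $+1$).

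First I would form the generating function of the left-hand side, namely
\[F(x)\;=\;\sum_{s\geq 0}\Bigl(\sum_{i=0}^{s}(-1)^i C_i\,\binom{i+1}{s-i}\Bigr)x^s,\]
and swap the order of summation. Pulling the $i$-sum outside and using $\sum_{j\geq 0}\binom{i+1}{j}x^j=(1+x)^{i+1}$, this gives
\[F(x)\;=\;\sum_{i\geq 0}(-1)^i C_i\,x^i(1+x)^{i+1}\;=\;(1+x)\sum_{i\geq 0}C_i\bigl(-x(1+x)\bigr)^i\;=\;(1+x)\,C\bigl(-x(1+x)\bigr).\]
The substitution is legitimate as a formal power series because $-x(1+x)$ has zero constant term.

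Next I would simplify $F(x)$ by substituting $y=-x(1+x)$ into the closed form of $C(y)$. One computes $1-4y=1+4x+4x^2=(1+2x)^2$, so (as a formal power series) $\sqrt{1-4y}=1+2x$. Therefore
\[F(x)\;=\;(1+x)\cdot\frac{1-(1+2x)}{2\cdot(-x(1+x))}\;=\;(1+x)\cdot\frac{-2x}{-2x(1+x)}\;=\;1.\]
Reading off coefficients, the $x^0$-coefficient of $F(x)$ is $1$ (matching $C_0\binom{1}{0}=1$ on the left), while every coefficient of $x^s$ for $s\geq 1$ vanishes, which is exactly the claim of \eqref{eq-formula-catalan}.

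The calculations above are all routine once one trusts the generating-function manipulations; the only subtle point, which I would flag explicitly, is that the identification $\sqrt{(1+2x)^2}=1+2x$ is the correct one in the ring of formal power series (since the square root used in $C(y)=(1-\sqrt{1-4y})/(2y)$ is the one with constant term $+1$, forced by $C(0)=1$). This is the only place where one could plausibly slip, but since $1+2x$ has constant term $1$, there is no ambiguity, and no further obstacle remains.
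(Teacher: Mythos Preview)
Your proof is correct and complete. The generating-function argument is clean: swapping the order of summation is justified since the inner sum is finite for each $s$, the substitution $y=-x(1+x)$ into $C(y)$ is legitimate in formal power series because $y$ has zero constant term, and your handling of the branch of the square root is exactly right.

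Your approach differs substantially from the paper's. The paper proves the claim by induction on $s$: after checking $s=1,2$ by hand, it uses the recurrence $C_i=C_{i-1}\cdot 2(2i-1)/(i+1)$ together with a somewhat delicate algebraic identity to express $C_i\binom{i+1}{s-i}$ as a specific linear combination of $C_{i-1}\binom{i}{s-1-i}$ and $C_{i-1}\binom{i}{s-i}$, thereby reducing to the cases $s-1$ and $s-2$. That argument is entirely elementary---it needs only the explicit formula $C_i=\binom{2i}{i}/(i+1)$ and binomial arithmetic---but the key algebraic step (splitting $2i-1$ as $\tfrac{2s-4}{s+1}(s-i)+\tfrac{2s-1}{s+1}(2i+1-s)$) is not easy to motivate. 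Your generating-function proof, by contrast, trades elementariness for transparency: it requires the closed form $C(y)=(1-\sqrt{1-4y})/(2y)$ as input, but once that is in hand the identity $1+4x(1+x)=(1+2x)^2$ makes the whole claim immediate and explains \emph{why} it is true.
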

\begin{proof}We prove the desired formula by induction on $s$. For $s=1$ and $s=2$, the formula is easy to check. So let us now assume that $s\geq 3$ and that the formula is true for $s-1$ and $s-2$.

Recall that for $i\geq 0$ we have $C_i=\binom{2i}{i}/(i+1)$, which implies that $C_i=C_{i-1}\cdot 2(2i-1)/(i+1)$ for $i\geq 1$. Consequently, for all $i\geq 1$ we have
\begin{align*}
C_i \cdot\binom{i+1}{s-i}&=2C_{i-1}\cdot(2i-1)\cdot \frac{1}{i+1}\binom{i+1}{s-i}\\
&=2C_{i-1} \cdot\left(\frac{2s-4}{s+1}(s-i)+\frac{2s-1}{s+1}(2i+1-s)\right)\cdot \frac{1}{i+1}\binom{i+1}{s-i}\\
&=\frac{4s-8}{s+1}\cdot C_{i-1}\cdot \frac{s-i}{i+1}\binom{i+1}{s-i}+\frac{4s-2}{s+1}\cdot C_{i-1}\cdot \frac{2i+1-s}{i+1}\binom{i+1}{s-i}\\
&=\frac{4s-8}{s+1}\cdot C_{i-1}\cdot \binom{i}{s-i-1}+\frac{4s-2}{s+1}\cdot C_{i-1}\cdot \binom{i}{s-i}.
\end{align*}
Now, recalling that $s\geq 3$, we obtain that
\begin{align*}
    \sum_{i=0}^{s}(-1)^i C_i\cdot \binom{i+1}{s-i}&=\sum_{i=1}^{\infty}(-1)^i C_i\cdot \binom{i+1}{s-i}\\
    &=-\frac{4s-8}{s+1}\sum_{i=1}^{\infty}(-1)^{i-1}C_{i-1}\cdot \binom{i}{s-1-i}-\frac{4s-2}{s+1}\sum_{i=1}^{\infty}(-1)^{i-1}C_{i-1}\cdot \binom{i}{s-i}\\
    &=-\frac{4s-8}{s+1}\sum_{i=0}^{\infty}(-1)^{i}C_{i}\cdot \binom{i+1}{s-2-i}-\frac{4s-2}{s+1}\sum_{i=0}^{\infty}(-1)^{i}C_{i}\cdot \binom{i+1}{s-1-i}\\
    &=-\frac{4s-8}{s+1}\sum_{i=0}^{s-2}(-1)^{i}C_{i}\cdot \binom{i+1}{s-2-i}-\frac{4s-2}{s+1}\sum_{i=0}^{s-1}(-1)^{i}C_{i}\cdot \binom{i+1}{s-1-i}=0,
\end{align*}
using the induction hypothesis for $s-2$ and $s-1$ in the last step.
\end{proof}

We will also use the following well-known formula for binomial coefficients (which is easy to prove, for example by double-counting): for all non-negative integers $m$, $n$ and $s$, we have
\begin{equation}\label{eq-formula-binom-coeff}
    \binom{n+s+1}{m-s}=\sum_{j= s}^\infty\binom{s+1}{j-s}\binom{n}{m-j}.
\end{equation}

From (\ref{eq-formula-catalan}) and (\ref{eq-formula-binom-coeff}) we can derive the following statement, which will be used in the proof of Lemma \ref{lemma-catalan}.

\begin{claim}\label{claim-expression-via-b}
For any non-negative integers $m$, $n$ and $s$, we have
\[\binom{n}{m}=\sum_{j=0}^{s}(-1)^jC_j\cdot \binom{n+j+1}{m-j}-\sum_{j=s+1}^{2s+1}\left(\sum_{i=0}^s (-1)^iC_i\cdot \binom{i+1}{j-i}\right)\binom{n}{m-j}.\]
\end{claim}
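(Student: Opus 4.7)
The plan is to expand the first sum on the right-hand side using the binomial identity \eqref{eq-formula-binom-coeff}, swap the order of summation, and then recognize that the Catalan identity \eqref{eq-formula-catalan} makes most terms cancel. Concretely, applying \eqref{eq-formula-binom-coeff} (with $s$ replaced by $j$ there) gives
\[\binom{n+j+1}{m-j}=\sum_{k=j}^{\infty}\binom{j+1}{k-j}\binom{n}{m-k},\]
so after substituting this into $\sum_{j=0}^{s}(-1)^jC_j\binom{n+j+1}{m-j}$ and swapping sums, the expression becomes
\[\sum_{k=0}^{\infty}\binom{n}{m-k}\cdot \Big(\sum_{j=0}^{\min(s,k)}(-1)^jC_j\binom{j+1}{k-j}\Big).\]

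Next I would analyze the inner coefficient $\sum_{j=0}^{\min(s,k)}(-1)^jC_j\binom{j+1}{k-j}$ in three ranges of $k$. For $k=0$ it equals $C_0\binom{1}{0}=1$, yielding the $\binom{n}{m}$ term. For $1\le k\le s$ the sum extends up to $j=k$ (terms with $j>k$ vanish since $\binom{j+1}{k-j}=0$ when $k<j$... actually one should be careful: the relevant vanishing comes from the identity \eqref{eq-formula-catalan} itself), and \eqref{eq-formula-catalan} with parameter $k$ gives that it equals $0$. For $k\ge 2s+2$ every term vanishes because $\binom{j+1}{k-j}=0$ whenever $k-j\ge j+2$, which holds for all $j\le s$. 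This leaves only the range $s+1\le k\le 2s+1$ contributing, which after renaming $k\to j$ and the inner $j\to i$ matches exactly the second sum in the claim, with the correct sign.

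Combining these observations rearranges the identity into the form stated. The only mildly subtle points are (i) verifying the range argument for $k\ge 2s+2$ using the support of $\binom{j+1}{k-j}$, and (ii) handling the boundary $k=0$ separately from $1\le k\le s$ (since \eqref{eq-formula-catalan} is stated for $s\ge 1$ and gives $1$ rather than $0$ for $s=0$). Neither is a real obstacle; the main conceptual step is the summation swap, after which everything collapses via the Catalan relation.
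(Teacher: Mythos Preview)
Your proof is correct and takes a genuinely different route from the paper's. The paper proves the claim by induction on $s$: the base case $s=0$ is Pascal's identity, and the inductive step adds and subtracts the single term $(-1)^s C_s \binom{n+s+1}{m-s}$, then expands it via \eqref{eq-formula-binom-coeff} and absorbs the resulting pieces using \eqref{eq-formula-catalan} at the boundary $j=s$. Your approach is direct: you expand every term $\binom{n+j+1}{m-j}$ at once using \eqref{eq-formula-binom-coeff}, swap the two sums, and then read off the coefficient of $\binom{n}{m-k}$ in three ranges of $k$. Both arguments rest on the same two inputs, but yours applies \eqref{eq-formula-catalan} globally (once for each $k$ in $\{1,\dots,s\}$), whereas the paper applies it once per inductive step. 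Your version is arguably more transparent, since it shows directly why the three ranges $k=0$, $1\le k\le s$, and $k\ge 2s+2$ disappear and only $s+1\le k\le 2s+1$ survives; the paper's induction hides this structure but makes each individual step shorter.

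One small expositional point: in the range $1\le k\le s$ your parenthetical about ``terms with $j>k$ vanish'' is unnecessary and slightly muddled. Since $\min(s,k)=k$ there, the inner sum is already exactly $\sum_{j=0}^{k}(-1)^jC_j\binom{j+1}{k-j}$, and \eqref{eq-formula-catalan} applies verbatim with parameter $k\ge 1$. No extension of the summation range or separate vanishing argument is needed. Your points (i) and (ii) at the end are handled correctly.
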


\begin{proof}
First, note that for any $j>2s+1$, we have $\binom{i+1}{j-i}=0$ for all $i=0,\dots,s$. We can therefore equivalently write the desired equation as
\[\binom{n}{m}=\sum_{j=0}^{s}(-1)^jC_j\cdot \binom{n+j+1}{m-j}-\sum_{j=s+1}^{\infty}\left(\sum_{i=0}^s (-1)^iC_i\cdot \binom{i+1}{j-i}\right)\binom{n}{m-j}.\]
We prove this equation by induction on $s$. Note that for $s=0$, it simply states that (recall that $C_0=1$)
\[\binom{n}{m}=\binom{n+1}{m}-\binom{n}{m-1},\]
which is true. Now let us assume that $s\geq 1$ and that the desired equation is true for $s-1$. Then
\begin{align*}
\binom{n}{m}&=\sum_{j=0}^{s-1}(-1)^jC_j\cdot \binom{n+j+1}{m-j}-\sum_{j=s}^{\infty}\left(\sum_{i=0}^{s-1} (-1)^iC_i\cdot \binom{i+1}{j-i}\right)\binom{n}{m-j}\\
&=\sum_{j=0}^{s}(-1)^jC_j\cdot \binom{n+j+1}{m-j}-(-1)^sC_s\cdot \binom{n+s+1}{m-s}-\sum_{j=s}^{\infty}\left(\sum_{i=0}^{s-1} (-1)^iC_i\cdot \binom{i+1}{j-i}\right)\binom{n}{m-j}\\
&=\sum_{j=0}^{s}(-1)^jC_j\cdot \binom{n+j+1}{m-j}-\sum_{j=s}^{\infty}\left((-1)^sC_s\cdot\binom{s+1}{j-s}+ \sum_{i=0}^{s-1} (-1)^iC_i\cdot \binom{i+1}{j-i}\right)\binom{n}{m-j}\\
&=\sum_{j=0}^{s}(-1)^jC_j\cdot \binom{n+j+1}{m-j}-\sum_{j=s}^{\infty}\left(\sum_{i=0}^{s} (-1)^iC_i\cdot \binom{i+1}{j-i}\right)\binom{n}{m-j}\\
&=\sum_{j=0}^{s}(-1)^jC_j\cdot \binom{n+j+1}{m-j}-\sum_{j=s+1}^{\infty}\left(\sum_{i=0}^{s} (-1)^iC_i\cdot \binom{i+1}{j-i}\right)\binom{n}{m-j},
\end{align*}
where for the third equation we used (\ref{eq-formula-binom-coeff}), and in the last equation we used (\ref{eq-formula-catalan}). This finishes the proof of the claim.
\end{proof}
 
Using Claim \ref{claim-expression-via-b}, we now prove Lemma \ref{lemma-catalan}.

\begin{proof}[Proof of Lemma \ref{lemma-catalan}]
Fix $\ell\geq 1$. For convenience, let us denote the left-hand side of the equation in Lemma \ref{lemma-catalan} by $Z$. We then need to show that $Z=(-1)^\ell C_{\ell-1}$.

We have
\[Z=\sum_{\substack{(m_1,\dots,m_t)\\ m_1,\dots,m_t>0\\m_1+\dots+m_t=\ell}}(-1)^{t}\cdot \binom{\ell-m_1}{m_1-1}\binom{\ell-m_2}{m_2}\dots \binom{\ell-m_t}{m_t}.\]
Applying Claim \ref{claim-expression-via-b} to the first binomial coefficient in the product (with $n=\ell-m_1$ and $m=m_1-1$ and $s=\ell-1$), we obtain
\begin{multline*}
Z=\sum_{j=0}^{\ell-1}\sum_{\substack{(m_1,\dots,m_t)\\ m_1,\dots,m_t>0\\m_1+\dots+m_t=\ell}}(-1)^{t}\cdot (-1)^jC_j\binom{\ell-m_1+j+1}{m_1-1-j}\binom{\ell-m_2}{m_2}\dots \binom{\ell-m_t}{m_t}\\
-\sum_{j=\ell}^{2\ell-1}\sum_{\substack{(m_1,\dots,m_t)\\ m_1,\dots,m_t>0\\m_1+\dots+m_t=\ell}}(-1)^{t}\cdot\left(\sum_{i=0}^{\ell-1} (-1)^iC_i\binom{i+1}{j-i}\right) \binom{\ell-m_1}{m_1-1-j}\binom{\ell-m_2}{m_2}\dots \binom{\ell-m_t}{m_t}.
\end{multline*}
Note that the sum after the minus sign in the above equation is zero. Indeed, for all terms appearing in this sum we have $m_1\leq \ell\leq j$, so the binomial coefficient $\binom{\ell-m_1}{m_1-1-j}$ is zero. Hence
\[Z=\sum_{j=0}^{\ell-1}\sum_{\substack{(m_1,\dots,m_t)\\ m_1,\dots,m_t>0\\m_1+\dots+m_t=\ell\\t\geq 1}}(-1)^{j+t} C_j\binom{\ell-m_1+j+1}{m_1-j-1}\binom{\ell-m_2}{m_2}\dots \binom{\ell-m_t}{m_t}.\]
With a change of variables, replacing $m_1-j-1$ by $m_1$, we can rewrite this as
\[Z=\sum_{j=0}^{\ell-1}\sum_{\substack{(m_1,\dots,m_t)\\ m_1\geq 0,\, m_2,\dots,m_t>0\\m_1+\dots+m_t=\ell-j-1\\t\geq 1}}(-1)^{j+t} C_j\binom{\ell-m_1}{m_1}\binom{\ell-m_2}{m_2}\dots \binom{\ell-m_t}{m_t}\]
(here, a priori the condition on $m_1$ would be $m_1>-j-1$, but since the binomial coefficient $\binom{\ell-m_1}{m_1}$ vanishes for negative $m_1$, we can instead take the condition $m_1\geq 0$). Note that in the above equation, the contribution for $j=\ell-1$ is just $(-1)^{\ell-1+1}C_{\ell-1}\binom{\ell-0}{0}=(-1)^{\ell}C_{\ell-1}$. Thus,
\[Z=(-1)^{\ell}C_{\ell-1}+\sum_{j=0}^{\ell-2}\sum_{\substack{(m_1,\dots,m_t)\\ m_1\geq 0,\, m_2,\dots,m_t>0\\m_1+\dots+m_t=\ell-j-1}}(-1)^{j+t}C_j\binom{\ell-m_1}{m_1}\binom{\ell-m_2}{m_2}\dots \binom{\ell-m_t}{m_t}.\]
Now, in order to prove the lemma, it suffices to show that for any fixed $j\in \{0,\dots,\ell-2\}$ we have
\begin{equation}\label{eq-cancellation}
\sum_{\substack{(m_1,\dots,m_t)\\ m_1\geq 0,\, m_2,\dots,m_t>0\\m_1+\dots+m_t=\ell-j-1}}(-1)^{t}\cdot \binom{\ell-m_1}{m_1}\binom{\ell-m_2}{m_2}\dots \binom{\ell-m_t}{m_t}=0.
\end{equation}
Indeed, by distinguishing whether $m_1$ is positive or zero (and noting that for $m_1=0$ we have $\binom{\ell-m_1}{m_1}=\binom{\ell-0}{0}=1$), we can rewrite the left-hand size of (\ref{eq-cancellation}) as
\[\sum_{\substack{(m_1,\dots,m_t)\\ m_1, m_2,\dots,m_t>0\\m_1+\dots+m_t=\ell-j-1}}(-1)^{t}\cdot \binom{\ell-m_1}{m_1}\dots \binom{\ell-m_t}{m_t}+\sum_{\substack{(m_2,\dots,m_t)\\ m_2,\dots,m_t>0\\m_2+\dots+m_t=\ell-j-1}}(-1)^{t}\cdot \binom{\ell-m_2}{m_2}\dots \binom{\ell-m_t}{m_t}=0,\]
where the equality follows because the exact same summands appear in both sums, but with opposite signs. This proves  (\ref{eq-cancellation}), finishing the proof of Lemma \ref{lemma-catalan}.
\end{proof}

\section{Concluding remarks}

\subsection{Clifton and Huang's hyperplane problem}\label{subsec:clifton-huang}

As mentioned in the introduction, Clifton and Huang \cite{clifton-huang} studied the minimum size of a collection of hyperplanes in $\RR^n$ such that every point in $\{0,1\}^n\sm \{(0,\dots,0)\}$ is covered by at least $k$ of these hyperplanes, but no hyperplane contains $(0,\dots,0)$, where $k\geq 2$ is fixed and $n$ is large with respect to $k$. While Theorem \ref{thm-1} improves their lower bound for this problem to $n+2k-3$, the best known upper bound is still $n+\binom{k}{2}$. It would be very interesting to close this gap.

Clifton and Huang \cite{clifton-huang} conjectured that their upper bound $n+\binom{k}{2}$ for this hyperplane problem is tight if $n$ is sufficiently large with respect to $k$. Theorem \ref{thm-1} shows that this conjecture cannot be proved by following the approach of Clifton and Huang and only being more careful with the of the analysis of the coefficients of the polynomials appearing in their argument. In their approach, they consider a polynomial $f$ defined as the product of the linear hyperplane polynomials corresponding to a collection of hyperplanes satisfying the conditions. By applying the punctured higher-multiplicity version of the Combinatorial Nullstellensatz due to Ball and Serra \cite{ball-serra} to $f$, they obtain another polynomial $u$. This polynomial $u$ (and some of its derivatives) need to vanish at certain points, due to the higher-order vanishing properties of the polynomial $f$. Clifton and Huang show that these vanishing conditions for $u$ imply that $u$ must have sufficiently large degree (specifically, degree at least $3$ for $k=3$ and degree at least $5$ for $k=4$), and this gives their lower bound for the number of hyperplanes. In principle, one could hope to get better lower bounds from performing the analysis of the vanishing conditions for $u$ more carefully or for larger values of $k$. However, the second part of Theorem \ref{thm-1} implies that there exist polynomials of degree at most $2k-3$ satisfying the vanishing properties of the polynomial $u$ in the approach of Clifton and Huang. Hence one cannot prove a lower bound of $\binom{k}{2}$ on the degree of $u$ just by using the relevant vanishing conditions. Instead, when hoping to prove the conjecture that $n+\binom{k}{2}$ hyperplanes are necessary (if $n$ is sufficiently large with respect to $k$) with an argument along the lines of Clifton and Huang's approach, one would need to incorporate additional information about the polynomial $f$ from which $u$ is obtained. This polynomial $f$ is a product of linear polynomials, but this is not necessarily true for the polynomial $u$ obtained by applying the punctured higher-multiplicity version of the Combinatorial Nullstellensatz. Unfortunately, it is unclear how the fact that $f$ is a product of linear polynomials can be used when analyzing the polynomial $u$.

Note that Clifton and Huang's hyperplane problem is actually equivalent to the following problem: for fixed $k\geq 1$ and $n$ large with respect to $k$, what is the minimum possible degree of a polynomial $P\in \RR[x_1,\dots,x_n]$ with $P(0,\dots,0)\neq 0$ such that $P$ has zeroes of multiplicity at least $k$ at all points in $\{0,1\}^n\sm \{(0,\dots,0)\}$ and such that $P$ can be written as a product of linear polynomials? Without this last condition, this is precisely Problem \ref{problem-main}. By the results of Alon and Füredi \cite{alon-furedi}, the answers for both problems agree if $k=1$. The answers also agree for $k=2$ and $k=3$, since then $n+2k-3=n+\binom{k}{2}$. However, if Clifton and Huang's conjecture is true, then by Theorem \ref{thm-1} the answers must be different for $k\geq 4$. It would be interesting to prove (or disprove) that the two problems have different answers for sufficiently large $k$. 

One can also study a variant of Clifton and Huang's hyperplane problem, where one replaces the condition that no hyperplane contains $(0,\dots,0)$ by the condition that $(0,\dots,0)$ is covered by exactly $\ell$ hyperplanes for some given $0\leq \ell\leq k-1$. The case of $\ell=0$ corresponds to  Clifton and Huang's original hyperplane problem. As before, we can equivalently rephrase the problem in terms of polynomials, asking for the minimum possible degree of a polynomial $P\in \RR[x_1,\dots,x_n]$ with a zero of multiplicity exactly $\ell$ at $(0,\dots,0)$ and zeroes of multiplicity at least $k$ at all points in $\{0,1\}^n\sm \{(0,\dots,0)\}$ such that $P$ can be written as a product of linear polynomials. Again, one may ask whether the answer to this problem changes by omitting the last condition that $P$ is a product of linear polynomials. Theorems \ref{thm-2} and \ref{thm-3} determine the answer of the problem where the last condition is omitted (the answer is $n+2k-3$ for $0\leq \ell\leq k-2$ and $n+2k-2$ for $\ell=k-1$). By finding examples for the polynomial $P$ with the desired properties and of the appropriate degree such that $P$ is a product of linear polynomials, one can show that the answers for both problems agree for $k-3\leq \ell\leq k-1$. However, it is not clear what happens for smaller $\ell$.

Clifton and Huang also studied their hyperplane problem in the opposite parameter range, where the dimension $n$ is fixed and $k$ is large. They proved that for any fixed dimension $n$, the answer is of the form $(1+\frac{1}{2}+\dots+\frac{1}{n}+o(1))\cdot k$ as $k$ goes to infinity. It might also be interesting to study Problem \ref{problem-main} for fixed dimension $n$ and large $k$.

\subsection{Problem \ref{problem-main} over other fields}\label{subsec:other-fields}

One can also consider Problem \ref{problem-main} over other fields than $\RR$. For an arbitrary field $\FF$, let us say that a polynomial $P\in \FF[x_1,\dots,x_n]$ has a zero of multiplicity at least $k$ at a point $(a_1,\dots,a_n)\in \FF^n$ if the following holds: when expanding the polynomial $P(x_1+a_1,\dots,x_n+a_n)\in \FF[x_1,\dots,x_n]$, all monomials occurring in $P(x_1+a_1,\dots,x_n+a_n)$ have degree at least $k$. Note that for $\FF=\RR$ this agrees with our definition in terms of the derivatives of $P$ at $(a_1,\dots,a_n)$.

Our proof of Theorems \ref{thm-1} to \ref{thm-3} works for every field of characteristic $0$, but it is also interesting to consider Problem \ref{problem-main} over fields of positive characteristic. Since Alon and Füredi's result \cite{alon-furedi} stated in Theorem \ref{thm-alon-furedi} is valid over any field, one might also expect the answer for Problem \ref{problem-main} to be independent of the characteristic of the field.

Interestingly, this is not the case, and the answer for Problem \ref{problem-main} does depend on the characteristic of the field. Specifically, for a field $\FF$ of characteristic $p>3$ and $k=(p+5)/2$, there exists a polynomial $P\in \FF[x_1,\dots,x_n]$ of degree $\deg P\leq n+2k-4$ with $P(0,\dots,0)\neq 0$ and such that $P$ has zeroes of multiplicity at least $k$ at all points in $\{0,1\}^n\sm \{(0,\dots,0)\}$. In particular, the statement in Theorem \ref{thm-1} fails to hold in this case. Theorem \ref{thm-1} similarly fails for a field of characteristic $2$ and $k=4$ and for a field of characteristic $3$ and $k=7$.

For example, for $k=4$ and the field $\FF=\FF_2$, one can check that the following polynomial of degree $n+2k-4 = n+4$ has zeroes of multiplicity at least $4$ at all points in $\FF_2^n \sm \{(0,\ldots,0)\}$, but does not vanish at $(0,\ldots,0)$:
\[
    \left(\prod_{\ell=1}^n(x_\ell+1)\right) \cdot \left( 1+\sum_{i=1}^n (x_i^3 + x_i^2 + x_i)+\sum_{1 \leq i \neq j \leq n}(x_i^3+x_i^2)x_j + \sum_{1 \leq i<j\leq n} x_i x_j+\sum_{1 \leq i<j<k\leq n} x_i x_j x_k\right).
\]

The relevance of the values $k=(p+5)/2$ in characteristic $p>3$, as well as $k=4$ in characteristic $p=2$, and $k=7$ in characteristic $p=3$ is as follows: in each of these cases, $k$ is the smallest number such that the Catalan number $C_{k-2}$ is divisible by $p$. By using the arguments from our proof (together with some additional analysis of our map $\varphi_k$), one can show that there is a counterexample to Theorem \ref{thm-1} for this value of $k$ in each of these cases (but the theorem holds for all smaller values). 
In fact, the proof of Theorem \ref{thm-1} essentially gives an algorithm for producing such counterexamples.
It is worth pointing out that the only point in our proof which is dependent on the characteristic of the field $\FF$ is the assertion that the Catalan numbers are non-zero in $\FF$.

Since the first part of Theorem \ref{thm-1} fails to hold over fields of positive characteristic, the first part of Theorem \ref{thm-2} (which is a more general statement) also fails for over fields of positive characteristic. Theorem \ref{thm-3}, however, holds over any field.

\textit{Acknowledgments.} We thank the anonymous referees for their careful reading of this paper, and for many helpful comments. This work began while the first author was at Stanford University and was completed while the first author was at the Institute for Advanced Study.

\end{document}